\newtheorem{theorem}{Theorem}[section]
\newtheorem{lemma}[theorem]{Lemma}
\newtheorem{remark}[theorem]{Remark}
\def\@email#1#2{%
 \endgroup
 \patchcmd{\titleblock@produce}
  {\frontmatter@RRAPformat}
  {\frontmatter@RRAPformat{\produce@RRAP{*#1\href{mailto:#2}{#2}}}\frontmatter@RRAPformat}
  {}{}
}%
\begin{document}

\preprint{ }

\title[Dynamics of a diffusive predator-prey system]{Dynamics of a diffusive predator-prey system with fear effect in advective environments}

\author{Daifeng Duan}

\affiliation{School of Science, Nanjing University of Posts and Telecommunications, Nanjing 210023, P.R.China.}

\author{Ben Niu*}
 \email{niu@hit.edu.cn}
\affiliation{Department of Mathematics, Harbin Institute of Technology, Weihai, Shandong 264209, P.R.China.}

\author{Yuan Yuan}%

\affiliation{Department of Mathematics and Statistics, Memorial University of Newfoundland,
St. John's, NL A1C 5S7, Canada. 
}%

\date{\today}

\begin{abstract}
We explore a diffusive predator-prey system that incorporates the fear effect in advective environments. Firstly, we analyze the eigenvalue problem and the adjoint operator, considering Constant-Flux and Dirichlet (CF/D) boundary conditions, as well as Free-Flow (FF) boundary conditions. Our investigation focuses on determining the direction and stability of spatial Hopf bifurcation, with the generation delay $\tau$ serving as the bifurcation parameter. Additionally, we examine the influence of both linear and Holling-II functional responses on the dynamics of the model. Through these analyses, we aim to gain a better understanding of the intricate relationship between advection, predation, and prey response in this system.\\
~\\
Keywords: Advection; Predator-prey; Fear effect; Hopf bifurcation
\end{abstract}

\maketitle

\section{\label{sec1}Introduction}
The predator-prey system, which involves the interaction of different populations within a single area, is widely recognized as a crucial component of the biological system. This system is instrumental in explaining various aspects of growth interaction, competition, and coevolution between species. Furthermore, all species rely on acquiring adequate resources from their environment to ensure their survival and avert extinction.
Recent studies have shed light on various types of interspecific interactions within predator-prey systems, with particular emphasis on the "fear effect" \cite{Lima1998Nonlethal,Creel2008Relationships,Lima2009Predators}. Fear, which is experienced by both humans and organisms, involves psychological activity and emotions. It is characterized by intense biological physiological tissue contractions, a notable increase in tissue density, rapid energy release, and an emotional response aimed at evading potential threats. While predators directly kill their prey, there is a school of thought suggesting that the presence of predators can significantly alter the behavior of prey, primarily driven by fear of predation risks. This indirect impact has been largely overlooked, with most functional response models focusing solely on direct predation. However, Wang et al. \cite{Wang2016Modelling}  proposed a pioneering mathematical model that considers the decrease in the birth rate of prey populations due to predation risk concerns. The model is as follows:
\begin{eqnarray}\label{523a}
\begin{cases}
    \dot{u}=u\big[r_{0}f(K,v)-d-au\big]-g(u)v, \\
    \dot{v}=v\big(-r_{2}+c g(u)\big).
\end{cases}
\end{eqnarray}
Following Wang et al.'s initial work, subsequent studies have explored various predator-prey models that incorporate fear costs into prey reproduction \cite{Duan2019Hopf,Wang2022Spatiotemporal}.
These studies primarily focused on analyzing the global existence and boundedness of solutions \cite{Panday2018Stability,Han2020Cross}, diffusion-induced instability \cite{Wu2018Dynamics}, persistence and coexistence \cite{Liu2021Influence}, Turing instability and Hopf bifurcation \cite{Zhang2023Dynamics}, and other related aspects. By incorporating the concept of fear into mathematical models, these researchers aimed to gain a better understanding of the dynamics and stability of predator-prey systems.

In addition to the fear effect and predator-prey interactions, researchers have also started considering the impact of external environmental forces on the survival and population dynamics of species. Examples of such forces include rivers, wind, and gravity. These external factors can significantly affect the movement and dispersal of individuals in a population, which in turn influences the dynamics and behavior of the population as a whole. Inspired by these real-world scenarios, several scholars have directed their attention towards studying population dynamics in advective environments. This field of research aims to understand how the interaction between species and external forces shapes population dynamics and distribution patterns on
\cite{Jin2014Seasonal,Lutscher2006Effects,Lutscher2007Spatial}.
Speirs and Gurney \cite{Speirs2001Population} investigated a single population reaction-diffusion-advection model:
\begin{eqnarray}\label{aa1}
\begin{cases}
u_t=d_1u_{xx}-q_1u_{x}+u\Big(r-au\Big),\quad t>0, x\in(0,L),\\
d_1u_{x}(0,t)-q_1u(0,t)=0,\quad t>0,\\
u(L,t)=0,\quad t>0.
\end{cases}
\end{eqnarray}
They imposed a no-flux boundary condition on the upstream boundary reflecting the fact that some individuals in the population do not flow upstream, in the river system for instance. On the other hand, individuals that flow downstream from the river into other environments, such as the ocean, may not return. As a result, the downstream boundary condition is set to zero.

Based on the model (\ref{aa1}), some researchers have found that when considering the advection term in the reaction-diffusion model with time delay, there is a possibility of observing Hopf bifurcation under certain boundary conditions \cite{Chen2018Hopf,Chen2020Bifurcation,Bin2021Bifurcation}. Chen et al. \cite{Chen2018Hopf} discovered that the presence of time delay can destabilize spatial nonhomogeneous positive steady-state solutions and lead to oscillation patterns through the occurrence of Hopf bifurcation. Additionally, increasing the advective coefficient increases the likelihood of Hopf bifurcation happening. In another study, Chen et al. \cite{Chen2020Bifurcation}  investigated the dynamic properties of the reaction-diffusion-advection equation with nonlocal terms and explored the existence of nonconstant positive steady-state solutions and the characteristics of Hopf bifurcation.
In the existing literature,
many researchers have extensively studied the stability and bifurcation of predator-prey models from various perspectives and have also explored bifurcation control. However, to our best knowledge, it seems that there is still relatively limited research on diffusion-convection predator-prey models with time delays and fear effects. On the other hand, the functional response functions that describe the interactions between populations hold certain similarities, making it practically significant to abstract and summarize them. Building on this knowledge, we are going to investigate the following predator-prey models.
\begin{eqnarray}\label{523a}
\begin{cases}
    u_t=d_{1}u_{xx} -q_{1}v_x +u\big[r_{0}f(K,v)-d-au(x,t-\tau)\big]-g(u)v, \quad t>0, x\in(0,l\pi), \\
    v_t=d_{2}v_{xx} -q_{2}v_x +v\big(-r_{2}-mv+c g(u)\big), \quad t>0, x\in(0,l\pi),
\end{cases}
\end{eqnarray}
where $u(x,t)$ represents population density at location $x$ and time $t$,
$x=0$ and $x=L$ denote the upstream and downstream ends of the habitat, respectively.
The intrinsic growth rate and maximum environmental capacity of the population are denoted by $r$ and $K$, with the diffusion coefficient and advection rate are represented by $d_1$ and $q_1$, respectively.
where $\tau$ represents the pregnancy time of the prey population.
As we know that there are different types of functional responses $g(u)$ are used to model the interactions between predators and their prey in ecological systems, such as Holling functionals $g(u)=p_0u$ (Type I) and $g(u)=\frac{p_0u}{1+c_1u}$ (Type II) can help us understand how changes in prey density can affect predator populations, and vice versa.
In this study, we try to keep the form $g$ in a general form allowing for a broader range of possibilities beyond the specific Holling functional responses.
The function $f(K,v)$ depends on the level of fear $K$ and predator density $v$, and satisfies
\begin{eqnarray}\label{e718}
\begin{cases}
f(0,v)=1,~{\lim_{K \to \infty}} f(K,v)=0,~f'_{K}(K,v)<0,\\
f(K,0)=1,~~{\lim_{v \to \infty}} f(K,v)=0,~f'_{v}(K,v)<0.
\end{cases}
\end{eqnarray}
In advective environments, where populations experience significant differences in survival conditions, there are typically three types of boundary conditions that have practical application value \cite{Lou2014Evolution}. These include:

(I) Constant-Flux (CF) boundary condition:
\begin{eqnarray*}
d_{1} u_x(x,t)-q_{1} u(x,t)=-q_{1}\kappa_1,~x=0, l\pi, t\geq 0,\\
d_{2} v_x(x,t)- q_{2} v(x,t)=-q_{2}\kappa_2,~x=0, l\pi, t\geq 0.
\end{eqnarray*}
Case (I) applies to organisms in water, where gravity pulls algae towards the bottom (advection), while buoyancy allows organisms to move upwards (diffusion).

(II) Free-Flow (FF) boundary condition:
\begin{eqnarray}\label{a818}
u_x(x,t)=v_x(x,t)=0,~x=0, l\pi, t\geq 0,
\end{eqnarray}
which is commonly used to study how populations can freely pass through boundaries when rivers merge into lakes, and the flux at the boundaries is caused by advection. 

(III) Dirichlet type (D) boundary condition:
\begin{eqnarray}
u(x,t)=\kappa_1,v(x,t)=\kappa_2,~x=0, l\pi, t\geq 0,
\end{eqnarray}
implying that the population density of both upstream and downstream is constant. A common scenario is $\kappa_1=\kappa_2=0$, which means that most freshwater organisms cannot survive in the ocean and will die upon arrival.

Studying the dynamic properties of a predator-prey model with time delay specifically in a river setting holds significant practical application value. Investigating population dynamics in this context allows us to gain insights that can be applied to real-world scenarios and better address practical problems observed in nature. Although there is limited research on the reaction-diffusion-convection equation from a theoretical perspective, approaching this topic from a bifurcation perspective can greatly enhance our understanding of predator-prey models with advection, such as the existence, stability, and bifurcation analysis of positive steady-state solutions for multi-population delayed reaction-diffusion-convection equations \cite{Chen2020Bifurcation,Jin2021Hopf,Li2022Hopf,Liu2023Steady}.

To better understand how movement, dispersal, and connectivity impact population dynamics in advective environments, our discussion will center on the effects of various boundary conditions on the dynamic properties of reaction-diffusion-advection models with a maturation time delay $\tau$. Specifically, we will discuss the model by comparing different boundary conditions to assess their influence on the stability region of the steady-state solution and the fluctuation range of the population. First we assume
that the upstream is CF boundary and the downstream is D boundary, i.e,
\begin{eqnarray}\label{a623}
\begin{cases}
d_{1} u_x(0,t)-q_{1} u(0,t)=-q_{1}\kappa_1,~u(l\pi,t)=\kappa_1, t\geq 0,\\
d_{2} v_x(0,t)- q_{2} v(0,t)=-q_{2}\kappa_2,~v(l\pi,t)=\kappa_2,t\geq 0.
\end{cases}
\end{eqnarray}
Second, we select the FF boundary conditions.
To analyze the existence of a Hopf bifurcation at the positive steady state, we will utilize the time delay $\tau$ as a bifurcation parameter. The center manifold theory and the normal form method will be employed to derive the calculation formula for the normal form of the Hopf bifurcation.
Two models will be chosen as examples to further explain the dynamics phenomena associated with the Hopf bifurcation, one incorporates the Holling-I functional response and fear effect, another one implements the Holling-II functional response.
Furthermore, in the numerical simulations, we will compare the FF/FF boundary conditions with CF/D boundary conditions in (\ref{a623}). This comparison will help us understand how different boundary conditions affect the stability region of the system's steady-state solution as well as the range of population fluctuations due to varying eigenvalues.

The structure of the paper is as follows. In Section \ref{a717}, we obtain the eigenvalue of eigenvalue problem and the adjoint operator under either CF/D boundary conditions or FF/FF boundary conditions. In Section \ref{b717},
our focus lies on studying the linear stability near the positive steady-state solution. We will analyze the direction and stability of the spatial Hopf bifurcation by selecting the generation delay $\tau$ as the bifurcation parameter.
In Section \ref{c717}, we will examine the impact of the linear and Holling-II functional responses on the dynamics of the model under CF/D and FF/FF boundary conditions respectively.

\section{Some preliminaries}\label{a717}
When studying Hopf bifurcations and the stability of periodic solutions using center manifold theory, it is necessary to perform space decomposition.
To make further research more convenient, we introduce definitions and notation,
including Sobolev space and complexification. When the upstream is CF boundary and the
downstream is D boundary,
let
$$Z:=\{(\phi,\psi)\in H^2(0, l\pi)\times H^2(0, l\pi)|d_{1} \phi_{x}(0)-q_1 \phi(0)=0,~\phi(l\pi)=0,
    d_2 \psi_{x}(0)-q_2 \psi(0)=0,~\psi(l\pi)=0\}$$
be the real-valued Sobolev space, and
$$Z_{\mathbb{C}}=Z \oplus iZ=\{x_1+ix_2|x_1, x_2\in Z\}$$
be the complexification of $Z$.
Define $\mathcal{C}=C([-1,0],H^2((0,l\pi)))$, with the inner product:
$$\langle \tilde{a}, \tilde{b}\rangle=\int_{0}^{l\pi}(\bar{a}_1b_1+\bar{a}_2b_2)dx, \forall \tilde{a}(x)=(a_1(x),a_2(x)),\tilde{b}(x)=(b_1(x),b_2(x))\in H^2((0, l\pi)).$$
The norm induced by the inner product is denoted as $\|\cdot\|_{2}$,
$Dom(T)$ denotes the domain of a linear operator $T$. We first quote the following results in \cite{Kubrusly2012Spectral}.
Regarding the eigenvalue problem:
\begin{equation}\label{a25}
\begin{cases}
 d_1\frac{\partial^2X(x)}{\partial
x^2}-q_1\frac{\partial X(x)}{\partial x}=-\nu X(x),\\
d_1\frac{\partial X(0)}{\partial x}-q_1 X(0)=0,~~ X(l\pi)=0,
\end{cases}
\end{equation}
\begin{lemma}\label{c718}
The eigenvalue problem (\ref{a25}) corresponding to the boundary condition given in (\ref{a623}) with $\kappa_1=\kappa_2=0$ has a list of eigenvalues $\{\nu_n\}_{n=1}^{\infty}$ with
$$\nu_{n}=\frac{q_1^2}{4d_1}+d_1\sigma_{n}^2,$$
where $\sigma_{n}$ satisfies $$\tan(l\pi\sigma_{n})=-\frac{2d_1\sigma_{n}}{q_1}.$$
\end{lemma}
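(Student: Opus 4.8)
The plan is to solve the constant-coefficient second-order ODE $d_1 X'' - q_1 X' = -\nu X$ explicitly and then impose the two boundary conditions to extract both the admissible eigenvalues $\nu_n$ and the transcendental equation for $\sigma_n$. First I would substitute the ansatz $X(x) = e^{\lambda x}$ to obtain the characteristic equation $d_1 \lambda^2 - q_1 \lambda + \nu = 0$, whose roots are $\lambda_\pm = \frac{q_1}{2d_1} \pm \sqrt{\frac{q_1^2}{4d_1^2} - \frac{\nu}{d_1}}$. The key algebraic observation is that the advection term can be removed by the standard substitution $X(x) = e^{\frac{q_1}{2d_1} x} Y(x)$, which converts the problem into a self-adjoint Sturm--Liouville form $d_1 Y'' = -\left(\nu - \frac{q_1^2}{4d_1}\right) Y$. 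Setting $d_1 \sigma^2 = \nu - \frac{q_1^2}{4d_1}$, i.e. $\nu = \frac{q_1^2}{4d_1} + d_1 \sigma^2$, gives exactly the claimed form of the eigenvalues, so the task reduces to showing that the boundary conditions force $\sigma = \sigma_n$ with $\tan(l\pi\sigma_n) = -\frac{2d_1\sigma_n}{q_1}$.

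Next I would write the general oscillatory solution in the transformed variable as $Y(x) = A\cos(\sigma x) + B\sin(\sigma x)$ and translate the two boundary conditions of \eqref{a25} into conditions on $Y$. The downstream Dirichlet condition $X(l\pi) = 0$ becomes simply $Y(l\pi) = 0$, since the exponential prefactor never vanishes. The upstream Constant-Flux condition $d_1 X'(0) - q_1 X(0) = 0$ must be rewritten in terms of $Y$: using $X' = e^{\frac{q_1}{2d_1}x}\bigl(\frac{q_1}{2d_1} Y + Y'\bigr)$ and evaluating at $x=0$, the condition collapses to $d_1 Y'(0) - \frac{q_1}{2} Y(0) = 0$, where the $\frac{q_1}{2}$ (rather than $q_1$) arises precisely from the $\frac{q_1}{2d_1}$ shift. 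This is the step I expect to be the main obstacle, since it is easy to mishandle the prefactor and land on the wrong coefficient in the transcendental equation; care is needed to verify that the half appearing in $-\frac{2d_1\sigma_n}{q_1}$ is consistent.

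Finally I would substitute $Y(0) = A$, $Y'(0) = B\sigma$ into the transformed upstream condition to get $d_1 B\sigma - \frac{q_1}{2} A = 0$, and substitute into the downstream condition to get $A\cos(l\pi\sigma) + B\sin(l\pi\sigma) = 0$. Eliminating the constants $A$ and $B$ from this homogeneous $2\times 2$ system and requiring a nontrivial solution (vanishing determinant) yields the eigenvalue condition; solving for the ratio gives $\tan(l\pi\sigma) = -\frac{2d_1\sigma}{q_1}$, which has a countable increasing sequence of positive roots $\{\sigma_n\}_{n=1}^\infty$ by the standard interlacing of $\tan$ with a linear function. I would close by noting that each $\sigma_n$ produces exactly one eigenvalue $\nu_n = \frac{q_1^2}{4d_1} + d_1\sigma_n^2$, and that completeness and reality of the spectrum follow from the Sturm--Liouville theory invoked via \cite{Kubrusly2012Spectral}, so the list $\{\nu_n\}_{n=1}^\infty$ is exhaustive.
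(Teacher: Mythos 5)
Your proposal is correct in its main line, but note that the paper itself gives no proof of this lemma at all: it is quoted as a known result from the spectral-theory reference \cite{Kubrusly2012Spectral}, immediately before the statement. So your contribution is a self-contained elementary derivation where the paper defers to the literature. Your key steps check out: the Liouville substitution $X(x)=e^{\frac{q_1}{2d_1}x}Y(x)$ does reduce $d_1X''-q_1X'=-\nu X$ to $d_1Y''=-\bigl(\nu-\frac{q_1^2}{4d_1}\bigr)Y$; the Robin condition transforms to $d_1Y'(0)-\frac{q_1}{2}Y(0)=0$ (the halving of the coefficient is exactly right, and this is indeed the delicate point); and eliminating $A,B$ from $d_1B\sigma-\frac{q_1}{2}A=0$ and $A\cos(l\pi\sigma)+B\sin(l\pi\sigma)=0$ gives precisely $\tan(l\pi\sigma)=-\frac{2d_1\sigma}{q_1}$, whose positive roots form an increasing unbounded sequence since each branch of $\tan(l\pi\,\cdot)$ meets the decreasing line once. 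This transformed, self-adjoint formulation is also consistent with the paper's later use of the adjoint problem (Lemma \ref{d718} and the eigenfunctions in (\ref{a602})), so your route fits the paper's framework well.

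One genuine (though small) gap: to conclude that $\{\nu_n\}_{n=1}^{\infty}$ with $\nu_n=\frac{q_1^2}{4d_1}+d_1\sigma_n^2$ is the \emph{complete} list of eigenvalues, you must also rule out eigenvalues with $\nu\le\frac{q_1^2}{4d_1}$, i.e.\ the non-oscillatory cases you silently exclude by writing $Y=A\cos(\sigma x)+B\sin(\sigma x)$. This is easy but should be said: if $\nu=\frac{q_1^2}{4d_1}$ then $Y=A+Bx$, and the two boundary conditions force $B\bigl(\frac{2d_1}{q_1}+l\pi\bigr)=0$, hence $A=B=0$; if $\nu<\frac{q_1^2}{4d_1}$ then $Y=A\cosh(\sigma x)+B\sinh(\sigma x)$ with $\sigma>0$, and the conditions force $B\bigl(\frac{2d_1\sigma}{q_1}\cosh(l\pi\sigma)+\sinh(l\pi\sigma)\bigr)=0$, again only the trivial solution since $d_1,q_1>0$. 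This check is what justifies the paper's subsequent Remark that every eigenvalue satisfies $\nu_n>\frac{q_1^2}{4d_1}$; an appeal to Sturm--Liouville theory alone gives reality and discreteness of the spectrum but not, by itself, this lower bound.
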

%
%
\begin{remark}
It is easy to see that, for $\{\nu_n\}_{n=1}^{\infty}$, $\frac{q_1^2}{4d_1}<\nu_1<\nu_2<\ldots<\nu_n<\nu_{n+1}<\ldots$, and
$\lim\limits_{n\rightarrow\infty}\nu_n=+\infty.$
The standardized eigenfunctions are
\begin{equation}\label{bk}
b_n(x)=\frac{X_n(x)}{\|X_n(x)\|_{2}},n=1,2,\cdots,
\end{equation}
with $\|X_n(x)\|_{2}=\bigg(\int_{0}^{l\pi}X_n^{2}(x)dx\bigg)^{\frac{1}{2}}$.
\end{remark}
We denote the operator $T$ as $T\varphi=d_1\varphi_{xx}-q_1\varphi_{x}$, and
the adjoint operator $T^{\ast}$: $T^{\ast}\phi=d_1\phi_{xx}+q_1\phi_{x}$. Then we have the following result about the domain of $T$ and $T^{\ast}$.
\begin{lemma}\label{d718}
\begin{eqnarray*}
&&Dom(T)=\{\varphi\in H^2((0, l\pi)):d_{1} \varphi_{x}(0)-q_1 \varphi(0)=0,~\varphi(l\pi)=0\},\\
&&Dom(T^{\ast})=\{\phi\in H^2((0, l\pi)):\phi_{x}(0)=0,~\phi(l\pi)=0\}.
\end{eqnarray*}
\end{lemma}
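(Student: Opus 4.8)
The identification of $Dom(T)$ is essentially built into the construction: $T$ is the realization of the differential expression $d_1\varphi_{xx}-q_1\varphi_x$ on the subspace of $H^2((0,l\pi))$ cut out by the CF/D boundary conditions in (\ref{a623}) with $\kappa_1=\kappa_2=0$, so the first displayed formula holds by definition. The substance of the lemma therefore lies in computing $Dom(T^{\ast})$ from the abstract definition of the Hilbert-space adjoint: $\phi\in Dom(T^{\ast})$ exactly when the linear functional $\varphi\mapsto\langle T\varphi,\phi\rangle$ is continuous on $Dom(T)$ in the $L^2$-norm, in which case $T^{\ast}\phi$ is the unique $\psi$ obeying $\langle T\varphi,\phi\rangle=\langle\varphi,\psi\rangle$ for all $\varphi\in Dom(T)$. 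Since $T$ has real coefficients, the complex conjugation in the inner product plays no essential role in the bookkeeping below.

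The plan is to integrate by parts twice. Starting from $\int_0^{l\pi}(d_1\varphi_{xx}-q_1\varphi_x)\phi\,dx$, two integrations by parts transfer both derivatives onto $\phi$ and yield the formal adjoint expression $d_1\phi_{xx}+q_1\phi_x$ together with the boundary contribution
\begin{equation*}
\Big[\,d_1\varphi_x\phi-d_1\varphi\phi_x-q_1\varphi\phi\,\Big]_0^{l\pi},
\end{equation*}
which already accounts for the sign flip on the advection term. Next I would insert the boundary data carried by every $\varphi\in Dom(T)$: the Dirichlet condition $\varphi(l\pi)=0$ annihilates the two terms at $x=l\pi$ bearing a factor $\varphi(l\pi)$, leaving only $d_1\varphi_x(l\pi)\phi(l\pi)$ there, while the constant-flux condition $d_1\varphi_x(0)-q_1\varphi(0)=0$ lets me substitute $d_1\varphi_x(0)=q_1\varphi(0)$, after which the three terms at $x=0$ collapse. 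The whole boundary contribution thus reduces to
\begin{equation*}
d_1\,\varphi_x(l\pi)\,\phi(l\pi)+d_1\,\varphi(0)\,\phi_x(0).
\end{equation*}

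To finish I would argue that, as $\varphi$ ranges over $Dom(T)$, the two quantities $\varphi_x(l\pi)$ and $\varphi(0)$ take all real values independently: this is a standard surjectivity-of-the-trace fact, realized by exhibiting explicit $H^2$ test functions — supported near the two endpoints separately — that satisfy the CF/D conditions while prescribing these boundary values. Continuity of $\varphi\mapsto\langle T\varphi,\phi\rangle$ then forces the boundary pairing to vanish identically, so each coefficient must be zero, giving $\phi(l\pi)=0$ and $\phi_x(0)=0$. Conversely, when $\phi$ satisfies these two conditions the boundary term disappears and $\langle T\varphi,\phi\rangle=\langle\varphi,d_1\phi_{xx}+q_1\phi_x\rangle$ for every $\varphi\in Dom(T)$, confirming $\phi\in Dom(T^{\ast})$ with $T^{\ast}\phi=d_1\phi_{xx}+q_1\phi_x$. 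I expect the only delicate point to be this trace/independence step, together with the verification that the adjoint domain genuinely sits inside $H^2$ rather than the larger maximal domain of the formal adjoint; the latter follows from elliptic regularity, since the representing function $\psi=T^{\ast}\phi\in L^2$ and $\phi$ solves $d_1\phi_{xx}+q_1\phi_x=\psi$ weakly.
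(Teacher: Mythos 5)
Your proof is correct. Note that the paper states Lemma \ref{d718} with no proof at all---it is quoted as a standard fact (cf.\ the reference \cite{Kubrusly2012Spectral} invoked just before)---so your argument supplies exactly what the paper omits, and it does so by the standard route: integration by parts to obtain the boundary pairing $d_1\varphi_x(l\pi)\phi(l\pi)+d_1\varphi(0)\phi_x(0)$, independence of the traces $\varphi_x(l\pi)$ and $\varphi(0)$ as $\varphi$ ranges over $Dom(T)$ to force $\phi(l\pi)=0$ and $\phi_x(0)=0$, and the one-dimensional regularity bootstrap (setting $w=d_1\phi'+q_1\phi$, so $w'=\psi\in L^2$ gives $w\in H^1$, hence $\phi\in H^1$ and then $\phi\in H^2$) to confirm $Dom(T^{\ast})\subset H^2((0,l\pi))$ rather than the maximal domain of the formal adjoint. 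The computation, including the sign flip from $-q_1\varphi_x$ to $+q_1\phi_x$ in the formal adjoint, checks out and is consistent with the adjoint problem (\ref{a404}) and the eigenfunctions (\ref{a602}) used later in the paper.
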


If the boundary condition is FF/FF, which is paralleled to that done in \cite{Duan2019Hopf}. In the additional influence of advection terms, consider the following eigenvalue problem:
\begin{equation}\label{a622}
\begin{cases}
d_1X''(x)-q_1X'(x)=-\nu X(x),~x\in(0,l\pi),\\
X'(x)=0,~~ x=0,l\pi.
\end{cases}
\end{equation}
\begin{lemma}
The eigenvalue problem (\ref{a622}) corresponding to the boundary condition given in (\ref{a818}) has a list of eigenvalues $\{\mu_n\}_{n=0}^{\infty}$ with
\begin{equation}\label{b622}
\mu_{n}=
\begin{cases}
0, ~n=0,\\
\frac{q_1^2}{4d_1}+d_1\frac{n^2}{l^2},  ~n=1,2,\cdots,
\end{cases}
\end{equation}
and $\lim\limits_{n\rightarrow\infty}\mu_n=+\infty$.
The standardized eigenfunctions are
\begin{equation}\label{c622}
c_n(x)=\frac{X_n(x)}{\|X_n(x)\|_{2}},n=0,1,2,\cdots,
\end{equation}
and the characteristic function corresponding to each $\mu_{n}$ is
\begin{eqnarray}\label{d622}
X_n(x)=
\begin{cases}
1, ~n=0,\\
e^{\frac{q_1}{2d_1}x}\bigg(\cos\frac{\sqrt{4d_1\mu_n-q_1^2}}{2d_1}x
-\frac{q_1}{\sqrt{4d_1\mu_n-q_1^2}}
\sin\frac{\sqrt{4d_1\mu_n-q_1^2}}{2d_1}x\bigg),~n=1,2,\ldots.
\end{cases}
\end{eqnarray}
\end{lemma}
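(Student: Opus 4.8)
The plan is to treat \eqref{a622} as a linear second-order constant-coefficient ordinary differential equation and solve it explicitly, letting the boundary conditions select the admissible values of $\nu$. Because the advection term $-q_1X'$ breaks the formal self-adjointness (the eigenfunctions carry the exponential weight $e^{q_1x/(2d_1)}$), I would not invoke classical Sturm--Liouville theory but instead exploit the closed-form solvability. First I would write the characteristic equation $d_1\lambda^2-q_1\lambda+\nu=0$, whose roots are $\lambda_{\pm}=\frac{q_1\pm\sqrt{q_1^2-4d_1\nu}}{2d_1}$, and organize the argument according to the sign of the discriminant $q_1^2-4d_1\nu$, equivalently according to whether $\nu$ is less than, equal to, or greater than $q_1^2/(4d_1)$, together with the degenerate value $\nu=0$.

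For the genuinely oscillatory regime $\nu>q_1^2/(4d_1)$ (the one producing the nontrivial spectrum), set $\alpha=\frac{q_1}{2d_1}$ and $\beta=\frac{\sqrt{4d_1\nu-q_1^2}}{2d_1}$, so that the general solution is $X(x)=e^{\alpha x}\bigl(A\cos\beta x+B\sin\beta x\bigr)$. I would then impose $X'(0)=0$, which forces $B=-(\alpha/\beta)A$ and, after noting $\alpha/\beta=q_1/\sqrt{4d_1\nu-q_1^2}$, reproduces exactly the bracketed eigenfunction in \eqref{d622}. Substituting into the downstream condition $X'(l\pi)=0$ and using $\alpha A+\beta B=0$ collapses the boundary relation to $(\alpha B-\beta A)\sin(\beta l\pi)=0$; since $\alpha B-\beta A=-A(\alpha^2+\beta^2)/\beta\neq0$ for $A\neq0$, a nontrivial eigenfunction exists iff $\sin(\beta l\pi)=0$, i.e. $\beta=n/l$ with $n\ge1$. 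Unwinding the definition of $\beta$ yields $\nu=\frac{q_1^2}{4d_1}+d_1\frac{n^2}{l^2}=\mu_n$, and the corresponding $X_n$ is exactly \eqref{d622}.

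It remains to rule out additional eigenvalues in the non-oscillatory regimes, which I expect to be the most delicate bookkeeping rather than the main computation. When $\nu=0$ the equation reduces to $d_1X''-q_1X'=0$ with general solution $c_1+c_2e^{q_1x/d_1}$, and the two Neumann-type conditions force $c_2=0$, leaving the constant eigenfunction $X_0\equiv1$ with $\mu_0=0$. For $0<\nu<q_1^2/(4d_1)$ and for $\nu<0$ the roots $\lambda_\pm$ are real and distinct; writing the $2\times2$ system obtained from $X'(0)=X'(l\pi)=0$ in the unknowns $(c_1,c_2)$, its determinant equals $\lambda_+\lambda_-\bigl(e^{\lambda_-l\pi}-e^{\lambda_+l\pi}\bigr)$, and since $\lambda_+\lambda_-=\nu/d_1\neq0$ with $\lambda_+\neq\lambda_-$ real, this determinant is nonzero, so only the trivial solution survives. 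Finally, at the repeated root $\nu=q_1^2/(4d_1)$ the solution $(c_1+c_2x)e^{\alpha x}$ again yields only $c_1=c_2=0$ after imposing both conditions, using $\alpha=q_1/(2d_1)\neq0$. Collecting the three regimes gives precisely the list $\{\mu_n\}_{n=0}^\infty$ in \eqref{b622}; dividing each $X_n$ by its $\|\cdot\|_2$-norm produces the normalized eigenfunctions \eqref{c622}, and the limit $\mu_n\to+\infty$ is immediate from the closed form.
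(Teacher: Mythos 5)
The paper itself offers no proof of this lemma: it is stated as a quoted result, parallel to the CF/D case (Lemma \ref{c718}, cited to Kubrusly's spectral theory text) and to the non-advective analysis in Duan et al. So there is no internal argument to compare against; judged on its own, your proof is correct and self-contained. The computations all check out: in the oscillatory regime $\nu>q_1^2/(4d_1)$, writing $\alpha=q_1/(2d_1)$, $\beta=\sqrt{4d_1\nu-q_1^2}/(2d_1)$, the condition $X'(0)=0$ gives $B=-(\alpha/\beta)A$, which reproduces the eigenfunction in (\ref{d622}), and $X'(l\pi)=0$ then reduces to $\sin(\beta l\pi)=0$, i.e.\ $\beta=n/l$, which is exactly (\ref{b622}); the case $\nu=0$ yields the constant eigenfunction; and your determinant computation $\lambda_+\lambda_-\bigl(e^{\lambda_- l\pi}-e^{\lambda_+ l\pi}\bigr)$ correctly eliminates the regimes $\nu<0$, $0<\nu<q_1^2/(4d_1)$, and the double-root value $\nu=q_1^2/(4d_1)$.

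Two caveats, one cosmetic and one substantive but easily patched. First, your framing remark that the advection term ``breaks formal self-adjointness'' so that Sturm--Liouville theory is unavailable is not accurate: multiplying by $e^{-q_1x/d_1}$ puts the equation in the form $d_1\bigl(e^{-q_1x/d_1}X'\bigr)'=-\nu e^{-q_1x/d_1}X$, and $X'(0)=X'(l\pi)=0$ are natural boundary conditions for this weighted form, so (\ref{a622}) is a regular self-adjoint Sturm--Liouville problem in $L^2\bigl(e^{-q_1x/d_1}dx\bigr)$. This is harmless since you never use the claim, but it bears on the second point: your case analysis runs only over \emph{real} $\nu$ (the trichotomy by the sign of the discriminant presupposes this), so complex eigenvalues are never excluded, and without that the list $\{\mu_n\}_{n=0}^{\infty}$ is not yet shown to exhaust the spectrum. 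The gap closes in one line by either route: (i) invoke the self-adjointness just described, which forces all eigenvalues to be real; or (ii) note that your determinant argument is valid for any complex $\nu\neq 0$ with distinct characteristic roots, where vanishing of the determinant requires $e^{\lambda_+l\pi}=e^{\lambda_-l\pi}$, i.e.\ $\lambda_+-\lambda_-=\sqrt{q_1^2-4d_1\nu}\,/d_1=2ik/l$ for some nonzero integer $k$, which again gives $\nu=q_1^2/(4d_1)+d_1k^2/l^2$ and nothing new. With either patch your argument is a complete proof of the lemma.
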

\begin{lemma}
\begin{eqnarray*}
&&Dom(T)=\{\varphi\in H^2((0, l\pi)): \varphi_{x}(0)=0,~\varphi_{x}(l\pi)=0\},\\
&&Dom(T^{\ast})=\{\phi\in H^2((0, l\pi)):d_1\phi_{x}(0)+q_1\phi(0)=0,~d_1\phi_{x}(l\pi)+q_1\phi(l\pi)=0\}.
\end{eqnarray*}
\end{lemma}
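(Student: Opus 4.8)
The plan is to compute the domains of the operators $T\varphi = d_1\varphi_{xx} - q_1\varphi_x$ and its formal adjoint $T^\ast\phi = d_1\phi_{xx} + q_1\phi_x$ associated with the FF/FF eigenvalue problem (\ref{a622}). The key tool is integration by parts: for $\varphi,\phi \in H^2((0,l\pi))$ one computes
\begin{eqnarray*}
\langle T\varphi, \phi\rangle - \langle \varphi, T^\ast\phi\rangle
&=& \int_0^{l\pi}\!\bigl[(d_1\varphi_{xx}-q_1\varphi_x)\bar\phi - \varphi\,\overline{(d_1\phi_{xx}+q_1\phi_x)}\bigr]\,dx\\
&=& d_1\bigl[\varphi_x\bar\phi - \varphi\bar\phi_x\bigr]_0^{l\pi} - q_1\bigl[\varphi\bar\phi\bigr]_0^{l\pi}.
\end{eqnarray*}
The domain of $T$ is prescribed directly by the problem: $\mathrm{Dom}(T)=\{\varphi\in H^2: \varphi_x(0)=\varphi_x(l\pi)=0\}$, matching the Neumann (free-flow) boundary conditions in (\ref{a818}). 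The task is then to characterize $\mathrm{Dom}(T^\ast)$ as exactly those $\phi\in H^2$ for which the boundary terms above vanish for \emph{every} admissible $\varphi$.

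First I would substitute the boundary conditions $\varphi_x(0)=\varphi_x(l\pi)=0$ into the boundary form, which eliminates the $\varphi_x$ terms and leaves
$$
-d_1\bigl[\varphi(l\pi)\bar\phi_x(l\pi)-\varphi(0)\bar\phi_x(0)\bigr]
-q_1\bigl[\varphi(l\pi)\bar\phi(l\pi)-\varphi(0)\bar\phi(0)\bigr].
$$
Grouping by the free boundary values $\varphi(0)$ and $\varphi(l\pi)$, this equals
$$
\varphi(0)\bigl[d_1\bar\phi_x(0)+q_1\bar\phi(0)\bigr]
-\varphi(l\pi)\bigl[d_1\bar\phi_x(l\pi)+q_1\bar\phi(l\pi)\bigr].
$$
Since $\varphi(0)$ and $\varphi(l\pi)$ range over arbitrary independent complex values as $\varphi$ ranges over $\mathrm{Dom}(T)$ (the Neumann condition on the derivative places no constraint on the endpoint values themselves), the boundary form vanishes for all such $\varphi$ if and only if each bracketed coefficient is zero, i.e. $d_1\phi_x(0)+q_1\phi(0)=0$ and $d_1\phi_x(l\pi)+q_1\phi(l\pi)=0$. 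This is precisely the claimed $\mathrm{Dom}(T^\ast)$.

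The main obstacle, and the point requiring genuine care rather than routine computation, is the surjectivity/independence argument in the final step: one must justify that the traces $(\varphi(0),\varphi(l\pi))$ can indeed be chosen independently and arbitrarily while still satisfying $\varphi_x(0)=\varphi_x(l\pi)=0$, so that vanishing of the boundary form forces both coefficients separately to be zero. This is a standard trace-construction fact (one can exhibit explicit $H^2$ test functions with prescribed endpoint values and vanishing endpoint derivatives, e.g. suitable low-degree polynomials or smooth bump functions), but it is the logical crux that pins down $\mathrm{Dom}(T^\ast)$ exactly rather than merely as a superset. A parallel remark is that one should confirm $T^\ast$ so obtained is densely defined and that its boundary conditions are the correct Robin-type conditions reflecting the reversed advection sign $+q_1$; the symmetry between the $-q_1\varphi_x$ drift in $T$ and the $+q_1\phi_x$ drift in $T^\ast$ is exactly what converts the Neumann conditions of $T$ into the Robin conditions of $T^\ast$.
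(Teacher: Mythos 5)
Your proof is correct. Note that the paper itself states this lemma with no proof at all (as with the analogous CF/D lemma, it implicitly defers to the standard adjoint computation in its cited spectral-theory reference), so there is no in-paper argument to compare against; your integration-by-parts derivation is precisely the standard argument that fills this gap. The boundary-form identity $\langle T\varphi,\phi\rangle-\langle\varphi,T^{\ast}\phi\rangle=d_1\bigl[\varphi_x\bar\phi-\varphi\bar\phi_x\bigr]_0^{l\pi}-q_1\bigl[\varphi\bar\phi\bigr]_0^{l\pi}$ is computed correctly, and you rightly flag the logical crux: after imposing $\varphi_x(0)=\varphi_x(l\pi)=0$, the residual form is a linear functional of the two traces $\varphi(0),\varphi(l\pi)$, which range over arbitrary independent values within $\mathrm{Dom}(T)$ (e.g.\ via smooth functions equal to a constant near one endpoint and zero near the other), forcing both Robin coefficients $d_1\phi_x+q_1\phi$ to vanish at $x=0$ and $x=l\pi$ separately. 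One small point for full rigor: with the Hilbert-space definition of the adjoint, $\mathrm{Dom}(T^{\ast})$ is a priori a subset of $L^2$, so one should add the (routine) regularity remark that any $\phi\in\mathrm{Dom}(T^{\ast})$ satisfies $d_1\phi''+q_1\phi'\in L^2$ distributionally and hence lies in $H^2$; this justifies restricting attention to $H^2$ functions as both you and the paper's statement do. Your closing observation that the sign reversal of the drift term ($-q_1\varphi_x$ versus $+q_1\phi_x$) is what converts Neumann conditions into Robin conditions is exactly the right structural reading, and it is consistent with the paper's CF/D counterpart, where the same computation converts the Robin condition at $x=0$ into a Neumann one.
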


\section{Linear analysis}\label{b717}
Suppose the system (\ref{523a}) has a unique positive steady state solution $(u_{\ast},v_{\ast})$.
Let
$\tilde{u}=u-u_{\ast}$, $\tilde{v}=v-v_{\ast}$, through Taylor expansion at $(u_{\ast},v_{\ast})$, and drop the tildes in $\tilde{u}$ and $\tilde{v}$,
the system (\ref{523a}) with CF/D boundary conditions becomes
\begin{eqnarray}\label{105c}
\begin{cases}
\frac{\partial u}{\partial t}=d_{1}\frac{\partial^2 u}{\partial x^2}-q_{1}\frac{\partial u}{\partial x}
+\beta_{0}u+\beta_{1}v+\beta_{2}u(x,t-\tau)
+\beta_{3}u^{2}+\beta_{4}uv\\
~~~~~~~+\beta_{5}v^{2}+\beta_{6}uu(x,t-\tau)+\beta_{7}u^{3}+\beta_{8}v^{3}+\beta_{9}u^{2}v+\beta_{10}uv^{2}+..., ~~t>0, x\in(0,l\pi),\\
\frac{\partial v}{\partial t}=d_{2}\frac{\partial^2 v}{\partial x^2} -q_{2}\frac{\partial v}{\partial x}
 +\gamma_{0}u+\gamma_{1}v+\gamma_{2}u^2+\gamma_{3}uv+\gamma_{4}v^2+\gamma_{5}u^3+\gamma_{6}u^2v+..., ~~t>0, x\in(0,l\pi),\\
d_{1}\frac{\partial u(0,t)}{\partial x}-q_{1} u(0,t)=0,~u(l\pi,t)=0, t\geq 0,\\
d_{2}\frac{\partial v(0,t)}{\partial x}-q_{2} v(0,t)=0,~v(l\pi,t)=0,t\geq 0,
\end{cases}
\end{eqnarray}
where
\begin{eqnarray}\label{b213}
\begin{split}
\beta_{0}&=v_{\ast}\bigg[\frac{g(u_{\ast})}{u_{\ast}}-g'(u_{\ast})\bigg],~
\beta_{1}=r_{0}u_{\ast}f'_{v}(K,v_{\ast})-g(u_{\ast}),~\beta_{2}=-au_{\ast},\\
\beta_{3}&=-\frac{1}{2}g''(u_{\ast})v_{\ast},
\beta_{4}=r_{0}f'(K,v_{\ast})-g'(u_{\ast})
,~\beta_{5}=\frac{1}{2}r_{0}u_{\ast}f''_{vv}(K,v_{\ast}),\\
\beta_{6}&=-a,~\beta_{7}=-\frac{1}{6}g'''(u_{\ast})v_{\ast}, ~\beta_{8}=\frac{1}{6}r_0f'''_{vvv}(K,v_{\ast})u_{\ast},~
\beta_{9}=-\frac{1}{2}g''(u_{\ast}), \\
\beta_{10}&=\frac{1}{2}r_0f''_{vv}(K,v_{\ast}),~
\gamma_{0}=cv_{\ast}g'(u_{\ast}),~
\gamma_{1}=-mv_{\ast},~\gamma_{2}=\frac{1}{2}cg''(u_{\ast})v_{\ast},\\
\gamma_{3}&=cg'(u_{\ast}),~\gamma_{4}=-m,~\gamma_{5}=\frac{1}{6}cg'''(u_{\ast})v_{\ast}, ~\gamma_{6}=\frac{1}{2}cg''(u_{\ast}).
\end{split}
\end{eqnarray}
The linearized equation in (\ref{105c}) at $(0,0)$ is
\begin{eqnarray}\label{105d}
\begin{cases}
\frac{\partial u}{\partial t}=d_{1}\frac{\partial^2 u}{\partial x^2}-q_{1}\frac{\partial u}{\partial x}
+\beta_{0}u+\beta_{1}v+\beta_{2}u(x,t-\tau), \quad t>0, x\in(0,l\pi),\\
\frac{\partial v}{\partial t}=d_{2}\frac{\partial^2 v}{\partial x^2} -q_{2}\frac{\partial v}{\partial x}
 +\gamma_{0}u+\gamma_{1}v,\quad t>0, x\in(0,l\pi).
\end{cases}
\end{eqnarray}
To simplify the mathematical analysis, we assume that the diffusion coefficient and advection transmission speed of predator and prey are proportional respectively,
that is,
$d_2=\varepsilon d_1$, $q_2=\varepsilon q_1$.

Let
$$ \chi_{n_1}=\begin{pmatrix} b_n(x)   \\  0  \end{pmatrix},~~~\chi_{n_2}=\begin{pmatrix}   0 \\  b_n(x)  \end{pmatrix},~
n\in \mathbb{N}_{+},$$
where $b_n(x)$ is given in (\ref{bk}).
Obviously, $(\chi_{n_1},\chi_{n_2})$ forms a set of basis in the phase space $X$, and any element $y$ in $X$ can be expanded into the Fourier sequence:
\begin{eqnarray*}
y=\sum_{n=1}^\infty Y^{T}_n \begin{pmatrix} \chi_{n_1} \\  \chi_{n2} \end{pmatrix},
Y^{T}_n=(\langle y,\chi_{n_1} \rangle,\langle y,\chi_{n_2} \rangle).
\end{eqnarray*}
The characteristic equation is
\begin{eqnarray}\label{206a}
\sum_{n=1}^\infty Y^{T}_n \bigg[\lambda I_2+\begin{pmatrix}\nu_n-\beta_0-\beta_2e^{-\lambda \tau} & -\beta_1 \\ -\gamma_0  &  \varepsilon\nu_n-\gamma_1 \end{pmatrix} \bigg] \begin{pmatrix} \chi_{n_1} \\  \chi_{n_2} \end{pmatrix}=0,
\end{eqnarray}
which is equivalent to
\begin{eqnarray}\label{206b}
\lambda^{2}+M_n\lambda+T_n+\beta_2e^{-\lambda \tau}(-\lambda-\varepsilon\nu_n+\gamma_1)=0,
\end{eqnarray}
with $2\times2$ identity matrix $I_2$ and $M_n=(\varepsilon+1)\nu_n-\beta_0-\gamma_1$, $T_n=(\nu_n-\beta_0)(\varepsilon\nu_n-\gamma_1)-\gamma_0\beta_1$.
When $\tau=0$,
it is easy to obtain that
\begin{lemma}\label{b212}
If $(S_1)$ holds, the positive steady state solution $(u_{\ast},v_{\ast})$ of (\ref{523a}) is asymptotically stable with $\tau=0$, where
$$(S_1)~~\beta_2-M_n<0, \ \text{and} \ ~C_n=T_n+\beta_2(\gamma_1-\varepsilon\nu_n)>0.$$
\end{lemma}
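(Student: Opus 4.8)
When $\tau=0$, the characteristic equation (\ref{206b}) collapses because the exponential factor $e^{-\lambda\tau}$ becomes $1$. My plan is to rewrite the quadratic in $\lambda$ obtained for each Fourier mode $n$ and then apply the Routh--Hurwitz criterion mode by mode. Setting $\tau=0$ in (\ref{206b}) and grouping terms, the characteristic equation for mode $n$ reads
\begin{eqnarray*}
\lambda^2+\big(M_n-\beta_2\big)\lambda+\big(T_n+\beta_2(\gamma_1-\varepsilon\nu_n)\big)=0.
\end{eqnarray*}
This is a genuine quadratic with constant (in $\lambda$) coefficients, so the stability question reduces entirely to the signs of the two coefficients.

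The key step is then to invoke the Routh--Hurwitz criterion for a degree-two polynomial $\lambda^2+p\lambda+q$: both roots have negative real part if and only if $p>0$ and $q>0$. Here $p=M_n-\beta_2$ and $q=T_n+\beta_2(\gamma_1-\varepsilon\nu_n)=C_n$. The hypothesis $(S_1)$ asserts precisely $\beta_2-M_n<0$, i.e.\ $p=M_n-\beta_2>0$, together with $C_n>0$, i.e.\ $q>0$. Thus under $(S_1)$ every mode-$n$ root lies in the open left half-plane. I would carry this out by first fixing $n$, identifying $p$ and $q$ explicitly, citing Routh--Hurwitz, and concluding negativity of $\operatorname{Re}\lambda$ for that mode.

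The remaining work is to pass from mode-wise stability to stability of the full steady state. Since the eigenfunctions $\{b_n(x)\}$ form a complete orthonormal system and the linearized operator in (\ref{105d}) is block-diagonal in this basis (each Fourier coefficient $Y_n$ satisfies its own scalar characteristic equation (\ref{206b})), the spectrum of the full operator is the union over $n$ of the mode-$n$ roots. Because $\nu_n\to+\infty$ as $n\to\infty$, both $M_n$ and $C_n$ grow without bound for large $n$, so $(S_1)$ is automatically satisfied for all sufficiently large $n$; only finitely many modes need be checked by hypothesis. Hence all eigenvalues have negative real parts, and asymptotic stability of $(u_\ast,v_\ast)$ follows.

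\emph{Main obstacle.} The genuinely delicate point is not the Routh--Hurwitz application, which is routine, but justifying that the finitely many ``low'' modes plus the asymptotic growth of $\nu_n$ give a \emph{uniform} spectral bound $\sup_n \operatorname{Re}\lambda_n<0$, so that no sequence of eigenvalues approaches the imaginary axis from the left. I would address this by showing that for large $n$ the two roots behave like $-\nu_n+O(1)$ and a bounded branch, keeping them uniformly bounded away from $\operatorname{Re}\lambda=0$, which together with the strict inequalities in $(S_1)$ for the finitely many small $n$ yields the required uniform gap.
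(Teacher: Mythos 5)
Your proposal is correct and is essentially the argument the paper leaves implicit (the paper states the lemma as ``easy to obtain''): setting $\tau=0$ in (\ref{206b}) to get $\lambda^{2}+(M_n-\beta_2)\lambda+C_n=0$ for each mode and applying the Routh--Hurwitz criterion under $(S_1)$ is exactly the intended proof, and your extra care about the uniform spectral bound over all modes is a point the paper silently skips. One small correction to your asymptotic picture: since $M_n-\beta_2\sim(\varepsilon+1)\nu_n$ and $C_n\sim\varepsilon\nu_n^{2}$, the two roots behave like $-\nu_n$ and $-\varepsilon\nu_n$ (neither branch stays bounded), so both diverge to $-\infty$ and the uniform gap $\sup_n \operatorname{Re}\lambda_n<0$ follows even more directly than your sketch suggests.
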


When $\tau>0$, let $\lambda=i\omega$ $(\omega>0)$ be a root of the characteristic equation (\ref{206b}). Then
$\omega$ satisfies the following equation
\begin{eqnarray}\label{210a}
-\omega^2+M_ni\omega+T_n+\beta_2(\cos \omega\tau-i\sin\omega\tau)(-i\omega-\varepsilon\nu_n+\gamma_1)=0.
\end{eqnarray}
By separating the real and imaginary parts of (\ref{210a}), we get
\begin{equation}\label{210b}
\begin{cases}
\omega^2-T_n=\beta_2\big[(\gamma_1-\varepsilon\nu_n)\cos \omega\tau-\omega\sin \omega\tau\big],\\
-M_n\omega=\beta_2\big[-\omega\cos \omega\tau-(\gamma_1-\varepsilon\nu_n)\sin\omega\tau\big].
\end{cases}
\end{equation}
Adding the squares of the two equations in (\ref{210b}) and denote
$z=\omega^{2}$, $D_n=M_n^{2}-2T_n-\beta_2^2$, $E_n=T_n-\beta_2(\gamma_1-\varepsilon\nu_n)$, and $C_n$ in Lemma \ref{b212}, one have
\begin{equation}\label{211b}
z^{2}+D_nz+C_nE_n=0,
\end{equation}
If there exists an certain integer $n_0=\{1,2,...\}$, such that
$$(S_2)~~E_{n_0}<0,$$
or $$(S_3)~~D_{n_0}<0,E_{n_0}>0, D_{n_0}^2-4C_{n_0}E_{n_0}>0,$$
then
\begin{eqnarray*}
z_{n_0}=\begin{cases}
\frac{1}{2}\big[-D_{n_0}+\sqrt{D_{n_0}^2-4C_{n_0}E_{n_0}}\big], \mathrm{under}~(S_1)~\mathrm{and}~(S_2),  \\
\frac{1}{2}\big[-D_{n_0}\pm\sqrt{D_{n_0}^2-4C_{n_0}E_{n_0}}\big], \mathrm{under}~(S_1)~\mathrm{and}~(S_3).
\end{cases}
\end{eqnarray*}
From (\ref{210b}), we have
\begin{equation}\label{211c}
\begin{cases}
\cos \omega\tau=\frac{(\omega^2-T_n)(\gamma_1-\varepsilon\nu_n)+M_n\omega^2}{\beta_2[(\gamma_1-\varepsilon\nu_n)^2+\omega^2]}\triangleq C_n(\omega),\\
\sin\omega\tau=\frac{M_n\omega(\gamma_1-\varepsilon\nu_n)-\omega(\omega^2-T_n)}{\beta_2[(\gamma_1-\varepsilon\nu_n)^2+\omega^2]}\triangleq S_n(\omega).
\end{cases}
\end{equation}
Define
\begin{equation}\label{a819}
\tau_{n_0}^{j\pm}=\begin{cases}
\frac{1}{\omega_{n_0}^{\pm}}\big(\arccos (C_n(\omega_{n_0}^{\pm}))+2j\pi\big),~~ \mathrm{if}~ S_n(\omega_{n_0}^{\pm}\big)\geq 0,\\
\frac{1}{\omega_{n_0}^{\pm}}\big(2\pi-\arccos (C_n(\omega_{n_0}^{\pm}))+2j\pi\big), ~~\mathrm{if}~ S_n(\omega_{n_0}^{\pm})< 0,
\end{cases}
j\in\{0,1,2,...\}.
\end{equation}
Let $\lambda(\tau)=\alpha(\tau)+i\beta(\tau)$ be the root of (\ref{206b}) near $\tau=\tau_{n_0}^{j\pm}$ satisfying
\begin{eqnarray}\label{a718}
\alpha(\tau_{n_0}^{j\pm})=0,~~\beta(\tau_{n_0}^{j\pm})=\omega_{n_0}^{\pm},~~j=0,1,2,...,
\end{eqnarray}
then we have the following transversality condition.
\begin{lemma}\label{a212}
Suppose that $(S_1)$ and $(S_3)$ are satisfied. Then
$$\mathrm{sign} \bigg\{\mathrm{Re}\Big(\frac{d\lambda}{d\tau}\Big) \bigg\}_{\tau=\tau_{n_0}^{j+}}>0,~\mathrm{sign} \bigg\{\mathrm{Re}\Big(\frac{d\lambda}{d\tau}\Big) \bigg\}_{\tau=\tau_{n_0}^{j-}}<0.$$
\end{lemma}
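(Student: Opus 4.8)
The plan is to regard the characteristic equation (\ref{206b}) as implicitly defining $\lambda$ as a function of $\tau$, differentiate through it, and then exploit the elementary identity $\mathrm{sign}\{\mathrm{Re}(d\lambda/d\tau)\}=\mathrm{sign}\{\mathrm{Re}[(d\lambda/d\tau)^{-1}]\}$, which holds because $\mathrm{Re}(\zeta)$ and $\mathrm{Re}(1/\zeta)$ differ only by the positive factor $|\zeta|^{-2}$. Writing the left-hand side of (\ref{206b}) as $P(\lambda)+Q(\lambda)e^{-\lambda\tau}$ with $P(\lambda)=\lambda^{2}+M_n\lambda+T_n$ and $Q(\lambda)=\beta_2(\gamma_1-\varepsilon\nu_n-\lambda)$, I would differentiate with respect to $\tau$ and solve for the inverse derivative.

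Concretely, differentiation gives $[P'(\lambda)+Q'(\lambda)e^{-\lambda\tau}-\tau Q(\lambda)e^{-\lambda\tau}]\,d\lambda/d\tau=\lambda Q(\lambda)e^{-\lambda\tau}$, and substituting $Q(\lambda)e^{-\lambda\tau}=-P(\lambda)$ from the characteristic equation itself yields the closed form $\left(\frac{d\lambda}{d\tau}\right)^{-1}=\frac{P'(\lambda)}{-\lambda P(\lambda)}+\frac{Q'(\lambda)}{\lambda Q(\lambda)}-\frac{\tau}{\lambda}$. Evaluated at $\lambda=i\omega_{n_0}^{\pm}$ the term $-\tau/\lambda$ is purely imaginary and therefore contributes nothing to the real part; this is the key simplification that removes the explicit dependence on the particular branch $\tau_{n_0}^{j\pm}$.

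It then remains to compute the real parts of the two rational terms at $\lambda=i\omega$. Using $P'(i\omega)=M_n+2i\omega$ and $Q'(i\omega)=-\beta_2$, a direct calculation gives $\mathrm{Re}[P'/(-\lambda P)]=(M_n^{2}-2T_n+2\omega^{2})/|P(i\omega)|^{2}$ and $\mathrm{Re}[Q'/(\lambda Q)]=-1/[\omega^{2}+(\gamma_1-\varepsilon\nu_n)^{2}]$. Here I would invoke the modulus identity $|P(i\omega)|^{2}=|Q(i\omega)|^{2}=\beta_2^{2}[(\gamma_1-\varepsilon\nu_n)^{2}+\omega^{2}]$, which follows from $|e^{-i\omega\tau}|=1$ in the characteristic equation; this allows both terms to be placed over the common denominator $|P(i\omega)|^{2}>0$, after which the numerator collapses to $2\omega^{2}+M_n^{2}-2T_n-\beta_2^{2}=2z+D_n$, with $z=\omega^{2}$ and $D_n$ as defined just before (\ref{211b}).

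The finish is then immediate: the numerator $2z+D_n$ is precisely $h'(z)$ for the quadratic $h(z)=z^{2}+D_nz+C_nE_n$ of (\ref{211b}), and since $z_{n_0}^{\pm}=\tfrac12[-D_{n_0}\pm\sqrt{D_{n_0}^{2}-4C_{n_0}E_{n_0}}]$ one obtains $h'(z_{n_0}^{\pm})=\pm\sqrt{D_{n_0}^{2}-4C_{n_0}E_{n_0}}$. Under $(S_3)$ the discriminant is positive, so $\mathrm{Re}[(d\lambda/d\tau)^{-1}]$ at $\tau_{n_0}^{j+}$ and at $\tau_{n_0}^{j-}$ carries the signs of $+\sqrt{\cdot}$ and $-\sqrt{\cdot}$ respectively, which gives the claim; $(S_1)$ enters only to guarantee $C_{n_0}>0$, so that $z_{n_0}^{\pm}$ are the relevant positive crossing frequencies. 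I expect the main obstacle to be the bookkeeping in the two real-part computations and, above all, the correct deployment of the modulus identity so that the $Q'/Q$ contribution merges cleanly with the $P'/P$ contribution; once the expression reduces to $2z+D_n$, everything downstream is routine.
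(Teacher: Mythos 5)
Your proposal is correct and follows essentially the same route as the paper: implicit differentiation of (\ref{206b}), inversion of $d\lambda/d\tau$, substitution of the characteristic equation to eliminate $e^{-\lambda\tau}$, discarding the purely imaginary $\tau/\lambda$ term, and reduction of the real part to $\pm\sqrt{D_{n_0}^2-4C_{n_0}E_{n_0}}$ over the positive denominator $\beta_2^{2}\big[\omega_{n_0}^{2}+(\gamma_1-\varepsilon\nu_{n_0})^{2}\big]$. Your explicit use of the modulus identity $|P(i\omega)|=|Q(i\omega)|$ and the observation that the numerator is $h'(z_{n_0}^{\pm})$ merely make transparent the algebra the paper leaves implicit.
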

\begin{proof}
Differentiating the two sides of (\ref{206b}) with respect to $\tau$ leads to
$$\bigg(\frac{d\lambda}{d\tau}\bigg)^{-1}=\frac{2\lambda+M_n-\beta_2e^{-\lambda \tau}}
{\lambda\beta_2e^{-\lambda \tau}(-\lambda-\varepsilon\nu_n+\gamma_1)}-\frac{\tau}{\lambda},$$
thus
\begin{eqnarray*}
\mathrm{Re} \bigg(\frac{d\lambda}{d\tau}\bigg)^{-1}&=&\mathrm{Re} \bigg[\frac{2\lambda+M_n}{-\lambda(\lambda^2+M_n\lambda+T_n)}\bigg]-\mathrm{Re} \bigg[\frac{1}{\lambda(-\lambda-\varepsilon\nu_n+\gamma_1)}\bigg],\\
&=&\frac{\pm\sqrt{D_n^2-4C_nE_n}}{\beta_2^2[\omega_{n_0}^2+(\gamma_1-\varepsilon\nu_n)^2]}.
\end{eqnarray*}
Finally, we can get
$$\mathrm{sign} \bigg\{\mathrm{Re}\Big(\frac{d\lambda}{d\tau}\Big) \bigg\}_{\tau=\tau_{n_0}^{j+}}
=\mathrm{sign} \Big\{\sqrt{D_n^2-4C_nE_n}\Big\}>0,$$
$$\mathrm{sign} \bigg\{\mathrm{Re}\Big(\frac{d\lambda}{d\tau}\Big) \bigg\}_{\tau=\tau_{n_0}^{j-}}
=\mathrm{sign} \Big\{-\sqrt{D_n^2-4C_nE_n}\Big\}<0.$$
\end{proof}

\begin{remark}
Suppose that $(S_1)$ and $(S_2)$ are satisfied. Then
$\mathrm{sign} \bigg\{\mathrm{Re}\Big(\frac{d\lambda}{d\tau}\Big) \bigg\}_{\tau=\tau_{n_0}^{j+}}>0$.
\end{remark}

Denote $\bar{\tau}=\mathrm{min}\{\tau_{n_0}^{0+}\}$, $n_0\in\{1,2,3,...\}$ if $(S_2)$ holds, or
$\tilde{\tau}=\mathrm{min}\{\tau_{n_0}^{0+},\tau_{n_0}^{0-}\}$, $n_0\in\{1,2,3,...\}$ if $(S_3)$ holds. The corresponding purely imaginary roots are $\pm i\omega_{n_0}^{+}$ and $\pm i\omega_{n_0}^{\pm}$, respectively.
Applying Lemmas \ref{b212} and \ref{a212}, we can get the following conclusions.
\begin{theorem}\label{b819}
Assume that $(S_1)$ holds. For the system (\ref{523a}), we have\\
(i) If $(S_2)$ holds, then the positive steady state solution $(u_{\ast},v_{\ast})$ is asymptotically stable for $\tau\in[0,\bar{\tau})$; $(u_{\ast},v_{\ast})$  is unstable for $\tau>\bar{\tau}$;
$\tau=\tau_{n_0}^{j+}$ $(j = 0, 1, 2, ...)$ are Hopf bifurcation values and these Hopf bifurcations are all spatially inhomogeneous.\\
(ii) If $(S_3)$ holds, there are finite critical points $\tilde{\tau}<\tau^{1}<...<\tau^{k}$, and $\tau^{1},\tau^{2},...,\tau^{k}\in \{\tau_{n_0}^{j\pm}\}$, $k,j\in \mathbb{N}_0$. When $\tau\in[0,\tilde{\tau})\cup(\tau^{1},\tau^{2})\cup...\cup(\tau^{k-1},\tau^{k})$, $(u_{\ast},v_{\ast})$ is asymptotically stable; When $\tau\in(\tilde{\tau},\tau^{1})\cup(\tau^{2},\tau^{3})\cup...\cup(\tau^{k},+\infty)$, $(u_{\ast},v_{\ast})$ is unstable.
\end{theorem}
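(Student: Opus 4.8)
The plan is to track how the number of roots of the characteristic equation (\ref{206b}) lying in the open right half-plane evolves as $\tau$ increases from $0$, using that this number is an integer-valued, piecewise-constant function of $\tau$ that can change only when a root crosses the imaginary axis. First I would invoke Lemma \ref{b212}: under $(S_1)$ every root of (\ref{206b}) has negative real part at $\tau=0$, for every mode $n$, so the unstable-root count starts at zero. By the continuous (indeed analytic) dependence of the roots on $\tau$ together with a Rouch\'{e}-type argument, this count can only change at those $\tau$ for which (\ref{206b}) possesses a purely imaginary root $\lambda=i\omega$.

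Next I would pin down precisely where the crossings occur. Substituting $\lambda=i\omega$ and separating real and imaginary parts reduces matters to (\ref{211b}), namely $z^2+D_nz+C_nE_n=0$ with $z=\omega^2$; since $C_n>0$ by $(S_1)$, the sign of the constant term is controlled by $E_n$. Under $(S_2)$ there is exactly one admissible frequency $\omega_{n_0}^{+}$, giving the single family $\{\tau_{n_0}^{j+}\}$ of (\ref{a819}); under $(S_3)$ the discriminant condition yields two admissible frequencies $\omega_{n_0}^{-}<\omega_{n_0}^{+}$ (from $z_{n_0}^{\pm}=\tfrac12[-D_{n_0}\pm\sqrt{D_{n_0}^2-4C_{n_0}E_{n_0}}]$), giving the two families $\{\tau_{n_0}^{j+}\}$ and $\{\tau_{n_0}^{j-}\}$. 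Modes $n$ satisfying neither condition produce no positive $z$, hence no imaginary root and no crossing, so their roots stay in the left half-plane for all $\tau$ and may be discarded.

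For part (i) I would combine this with Lemma \ref{a212} and the remark following it: every crossing at $\tau_{n_0}^{j+}$ is transversal and left-to-right, so a conjugate pair enters the right half-plane and never returns. Thus the unstable count is $0$ on $[0,\bar{\tau})$ and strictly positive for $\tau>\bar{\tau}$, with $\bar{\tau}=\min_{n_0}\tau_{n_0}^{0+}$; stability is lost at $\bar{\tau}$, and since (\ref{a718}) and Lemma \ref{a212} certify simplicity of the imaginary root and nonzero crossing speed, each $\tau_{n_0}^{j+}$ is a Hopf value, spatially inhomogeneous because the associated eigenfunction is $b_{n_0}(x)$ with $n_0\geq1$.

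Part (ii) is the step I expect to be the main obstacle, and I would handle it by a counting argument on the two interlacing families. By Lemma \ref{a212} each $\tau_{n_0}^{j+}$ contributes $+2$ and each $\tau_{n_0}^{j-}$ contributes $-2$ to the unstable count, so this count equals $2\big(N^{+}(\tau)-N^{-}(\tau)\big)$, where $N^{\pm}(\tau)$ counts the critical delays of each type in $[0,\tau]$. Because consecutive delays in each family are equally spaced with gaps $2\pi/\omega_{n_0}^{+}$ and $2\pi/\omega_{n_0}^{-}$ and $\omega_{n_0}^{+}>\omega_{n_0}^{-}$, I obtain $N^{+}(\tau)-N^{-}(\tau)=(\omega_{n_0}^{+}-\omega_{n_0}^{-})\,\tau/(2\pi)+O(1)\to+\infty$; being integer-valued and piecewise constant, this difference can vanish only finitely often, which gives finitely many stability-switch points $\tilde{\tau}<\tau^1<\cdots<\tau^k$ drawn from $\{\tau_{n_0}^{j\pm}\}$ and forces permanent instability for large $\tau$. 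The delicate point is the precise alternation: one must verify $\tau_{n_0}^{0+}<\tau_{n_0}^{0-}$ so the first event at $\tilde{\tau}=\min\{\tau_{n_0}^{0+},\tau_{n_0}^{0-}\}$ is a loss of stability, and then read off from the merged ordering of the two families, together with the monotone asymptotic growth of $N^{+}-N^{-}$, that the count returns to zero exactly on the intervals $[0,\tilde{\tau})\cup(\tau^1,\tau^2)\cup\cdots\cup(\tau^{k-1},\tau^k)$ and is positive on their complement, which is the asserted partition.
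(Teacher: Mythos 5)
Your proposal is correct and takes essentially the same approach as the paper: the paper's own proof consists of invoking Lemma \ref{b212} (stability at $\tau=0$ under $(S_1)$) and Lemma \ref{a212} (transversality at the critical delays $\tau_{n_0}^{j\pm}$), with the standard right-half-plane root-counting argument left implicit, and your proposal is precisely that argument written out in full. The point you flag as delicate in part (ii) --- that the first crossing must be rightward, i.e. $\tilde{\tau}$ comes from the ``$+$'' family --- is in fact resolved automatically inside your own framework, since the unstable-root count starts at zero and cannot become negative, so a leftward ($-2$) crossing cannot occur first.
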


\section{Direction and stability of spatial Hopf bifurcation}\label{c717}
In the previous section, we have obtained some conditions under which a family of spatially inhomogeneous periodic solutions bifurcate from the positive constant steady state $(u_{\ast},v_{\ast})$  of (\ref{523a}) when $\tau$ crosses the critical value $\tau_{n_0}^{j\pm}$. In this section, we will investigate the directions and the stability of bifurcating periodic solutions by using the center manifold theorem and the normal formal theory of partial functional differential equations \cite{Wu1996Theory,Faria2002Trans}.

Let $\tilde{u}(x,t)=u(x,\tau t), \tilde{v}(x,t)=v(x,\tau t)$, $\tau=\tilde{\tau}+\mu$, $\mu\in \mathbb{R}$, and drop the tilde for convenience.
Then in the space $\mathcal{C}:=C([-1,0],X)$,
(\ref{105c}) can be written as an abstract form
\begin{equation}\label{a1}
    \frac{d}{dt}U(t)=\tilde{\tau}D\Delta U(t)-\tilde{\tau}M\nabla U(t)+L({\tilde{\tau}})(U_t)+F(\mu,U_t),
\end{equation}
where $U=(u_1,u_2)^{T}$, $u_1(t)=u(\cdot,t)$, $u_2(t)=v(\cdot,t)$, and
$$F(\mu,\phi)=\mu D\Delta \phi(0)-\mu M\nabla \phi(0)+L(\mu)(\phi)+f(\mu,\phi),\phi=(\phi_1,\phi_2)^{T}\in\mathcal{C},$$
$$L(\mu)(\cdot):\mathcal{C}\rightarrow X, F:\mathcal{C}\times \mathbb{R}\rightarrow X,
D=\begin{pmatrix} d_1 & 0  \\  0 & d_2 \end{pmatrix},~~M=\begin{pmatrix} q_1 & 0  \\  0 & q_2 \end{pmatrix},
$$
$$L(\tilde{\tau})(\phi)=\tilde{\tau}\begin{pmatrix} \beta_0\phi_{1}(0)+\beta_1\phi_{2}(0)+\beta_2\phi_{1}(-1) \\ \gamma_0\phi_{1}(0)+\gamma_1\phi_{2}(0) \end{pmatrix},L(\mu)(\phi)=\mu\begin{pmatrix} \beta_0\phi_{1}(0)+\beta_1\phi_{2}(0)+\beta_2\phi_{1}(-1) \\ \gamma_0\phi_{1}(0)+\gamma_1\phi_{2}(0) \end{pmatrix},$$
\begin{eqnarray*}
&&f(\mu,\phi)=(\tilde{\tau}+\mu)\cdot\begin{pmatrix} \beta_{3}\phi_{1}^2(0)+\beta_{4}\phi_{1}(0)\phi_{2}(0)+
\beta_{5}\phi_{2}^2(0)+\beta_{6}\phi_{1}(0)\phi_{1}(-1)\\+
\beta_{7}\phi_{1}^3(0)+\beta_{8}\phi_{2}^3(0)+\beta_{9}\phi_{1}^2(0)\phi_{2}(0)+
\beta_{10}\phi_{1}(0)\phi_{2}^2(0)+... \\
\gamma_{2}\phi_{1}^2(0)+\gamma_{3}\phi_{1}(0)\phi_{2}(0)+\gamma_{4}\phi_{2}^2(0)
+\gamma_{5}\phi_{1}^3(0)+\gamma_{6}\phi_{1}^2(0)\phi_{2}(0)+....
\end{pmatrix}
\end{eqnarray*}
For the linearized system
\begin{equation}\label{a415}
\frac{d}{dt}U(t)=\tilde{\tau}D\Delta U(t)-\tilde{\tau}M\nabla U(t)+L({\tilde{\tau}})(U_t),
\end{equation}
from (\ref{a718}) we know that $\{i\omega_{n_0}\tilde{\tau},-i\omega_{n_0}\tilde{\tau}\}$ is a pair of simple purely imaginary eigenvalues of the characteristic equation.
By the Riesz representation theorem, for $\phi\in C([-1,0],\mathbb{R}^2)$, there exists a $2\times2$ matrix function $\eta^{k}(\theta,\tilde{\tau})$ $(\theta\in[-1,0])$ such that
\begin{eqnarray*}
-\tilde{\tau}\nu_n\begin{pmatrix} 1 & 0\\ 0 & \varepsilon \end{pmatrix}\phi(0)+L({\tilde{\tau}})(\phi)=\int_{-1}^{0}d\eta^k(\theta,\tilde{\tau})\phi(\theta),
\end{eqnarray*}
where
\begin{eqnarray}
\eta^k(\theta,\tilde{\tau})=
\begin{cases}
\tilde{\tau}\begin{pmatrix} -\nu_n+\beta_0 & \beta_1 \\ \gamma_0 & -\varepsilon\nu_n+\gamma_1 \end{pmatrix}, & \theta=0,\\
0, & \theta\in(-1,0),\\
\tilde{\tau} \begin{pmatrix} -\beta_2 & 0\\ 0 & 0 \end{pmatrix}, & \theta=-1.
\end{cases}
\end{eqnarray}
Let $A(\tilde{\tau})$ be the infinitesimal generators of the semigroup induced by the solutions of (\ref{a415}) and $A^\ast$ denote the formal adjoint of $A(\tilde{\tau})$ under the following bilinear pairing
\begin{equation}\label{bi}
    (\psi,\phi)=\psi(0)\phi(0)-\int_{-1}^{0}\int_{\xi=0}^{\theta}\psi(\xi-\theta)d\eta^k(\theta, \tilde{\tau})\phi(\xi)d\xi, \quad k=1,2,\cdots,
\end{equation}
where $\phi\in C([-1,0],\mathbb{R}^2)$, $\psi\in C([0,1],\mathbb{R}^2)$.
Using the calculation method in \cite{Bin2023}, $A^\ast$ satisfies
\begin{equation}\label{a404}
\begin{cases}
A^\ast m=d_1m_{xx}+q_1 m_{x},\\
m_{x}(0)=0,m(l\pi)=0.
\end{cases}
\end{equation}
From Lemma \ref{c718} and Lemma \ref{d718}, we know that the associated eigenfunction is
\begin{equation}\label{b404}
\tilde{b}_n=\frac{m_n}{\|m_n\|_{2}},
\end{equation}
where
\begin{eqnarray}\label{a602}
m_n=e^{-\frac{q_1}{2d_1}x}\bigg(\cos\frac{\sqrt{4d_1\nu_n-q_1^2}}{2d_1}x
+\frac{q_1}{\sqrt{4d_1\nu_n-q_1^2}}
\sin\frac{\sqrt{4d_1\nu_n-q_1^2}}{2d_1}x\bigg),~n=1,2,\ldots.
\end{eqnarray}
Let $P$ and $P^{\ast}$ be the center subspace, which is the generalized eigenspace of $A(\tilde{\tau})$ and $A^\ast$, respectively. By tidious symbonic calculations, see \cite{Hale1977Theory,Wu1996Theory}, we can give the following results.
\begin{lemma}
Let
\begin{eqnarray*}
&&q_1=\frac{\gamma_0}{i\omega_{n_0}+\varepsilon\nu_{n_0}-\gamma_1},q_2^{\ast}=\frac{\beta_1}{i\omega_{n_0}+\varepsilon\nu_{n_0}-\gamma_1},\\
&&M=\frac{1}{1+q_1q_2^{\ast}+\beta_2\tilde{\tau}e^{-i\omega_{n_0}\tilde{\tau}}}.
\end{eqnarray*}
Then
$$q(\theta)=\begin{pmatrix} 1 \\ q_1  \end{pmatrix}e^{i\omega_{n_0}\tilde{\tau}\theta},\theta\in[-1,0]$$
is a basis of $P$ with $\{i\omega_{n_0}\tilde{\tau},-i\omega_{n_0}\tilde{\tau}\}$;
$$\hat{q}^{\ast}(\theta)=M(1,q^{\ast}_2)e^{-i\omega_{n_0}\tilde{\tau}s},\theta\in[0,1].$$
is a basis of $P^{\ast}$ with $\{i\omega_{n_0}\tilde{\tau},-i\omega_{n_0}\tilde{\tau}\}$.
\end{lemma}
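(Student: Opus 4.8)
\section*{Proof proposal}

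The plan is to recognize $q(\theta)$ and $\hat q^*(\theta)$ as the eigenfunctions of the generator $A(\tilde\tau)$ and of its formal adjoint $A^*$ associated with the critical eigenvalue $i\omega_{n_0}\tilde\tau$, and then to pin down the scalar $M$ through the normalization $(\hat q^*,q)=1$ relative to the bilinear pairing (\ref{bi}). I set $\lambda_0:=i\omega_{n_0}\tilde\tau$ and read off from the given $\eta^k(\cdot,\tilde\tau)$ that the generated Stieltjes measure has two atoms, $\tilde\tau B_0$ at $\theta=0$ and an atom at $\theta=-1$ that must reproduce the delay term of $L(\tilde\tau)$; that is, $\int_{-1}^0 d\eta^k(\theta,\tilde\tau)\phi(\theta)=\tilde\tau B_0\phi(0)+\tilde\tau\,\mathrm{diag}(\beta_2,0)\,\phi(-1)$ with $B_0=\begin{pmatrix} -\nu_{n_0}+\beta_0 & \beta_1 \\ \gamma_0 & -\varepsilon\nu_{n_0}+\gamma_1 \end{pmatrix}$. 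Note the atom at $\theta=-1$ carries $+\beta_2$, since the jump of the cumulative function $\eta^k$ there is the negative of its tabulated value $\mathrm{diag}(-\beta_2,0)$.

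First I would build $q$. On the $n_0$-th Fourier mode the generator acts by $A(\tilde\tau)\phi=\dot\phi$ for $\theta\in[-1,0)$ together with the side condition $\dot\phi(0)=\int_{-1}^0 d\eta^k(\theta,\tilde\tau)\phi(\theta)$. Inserting the ansatz $q(\theta)=\xi e^{\lambda_0\theta}$ makes the interior relation $Aq=\lambda_0 q$ hold identically, and the side condition collapses to the algebraic system $\big(\lambda_0 I_2-\tilde\tau B_0-\tilde\tau e^{-\lambda_0}\,\mathrm{diag}(\beta_2,0)\big)\xi=0$. Fixing the first entry of $\xi$ to $1$, the second scalar equation is $-\gamma_0+(i\omega_{n_0}+\varepsilon\nu_{n_0}-\gamma_1)q_1=0$, giving exactly $q_1=\gamma_0/(i\omega_{n_0}+\varepsilon\nu_{n_0}-\gamma_1)$; the first scalar equation coincides with the characteristic equation (\ref{206b}) evaluated at $\lambda=i\omega_{n_0}$ and so holds automatically by the definition of $\omega_{n_0}$. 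Hence $q$ spans the $\lambda_0$-eigenspace and, together with $\overline{q}$, yields the advertised basis of $P$.

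Next I would repeat the argument for the adjoint. Here $A^*\psi=-\dot\psi$ on $(0,1]$ with the transposed boundary relation $-\dot\psi(0)=\int_{-1}^0\psi(-\theta)\,d\eta^k(\theta,\tilde\tau)$, so the row ansatz $\hat q^*(s)=\zeta e^{-\lambda_0 s}$ again satisfies the interior equation identically and reduces the boundary relation to $\zeta\big(\lambda_0 I_2-\tilde\tau B_0-\tilde\tau e^{-\lambda_0}\,\mathrm{diag}(\beta_2,0)\big)=0$. Taking the first component of $\zeta$ to be $1$, its second-column equation $\beta_1+(-\varepsilon\nu_{n_0}+\gamma_1-i\omega_{n_0})q_2^*=0$ yields $q_2^*=\beta_1/(i\omega_{n_0}+\varepsilon\nu_{n_0}-\gamma_1)$, as stated; this confirms $\hat q^*$ spans $P^*$.

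Finally I would fix $M$ by computing $(\hat q^*,q)$ with $\zeta=M(1,q_2^*)$ still carrying the free scalar. The boundary contribution is $\hat q^*(0)q(0)=M(1+q_1 q_2^*)$. In the double integral of (\ref{bi}) only the atom at $\theta=-1$ contributes (the atom at $\theta=0$ has empty inner range), the two exponentials combine to the constant $e^{-\lambda_0}$, the inner integral over $\xi$ from $0$ to $-1$ supplies a factor $-1$, and the matrix sandwich $(1,q_2^*)\,\mathrm{diag}(\beta_2,0)\binom{1}{q_1}$ returns $\beta_2$; together this term equals $-M\beta_2\tilde\tau e^{-\lambda_0}$, so that $(\hat q^*,q)=M\big(1+q_1 q_2^*+\beta_2\tilde\tau e^{-i\omega_{n_0}\tilde\tau}\big)$. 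Imposing $(\hat q^*,q)=1$ then gives the stated $M$, while the complementary identity $(\hat q^*,\overline{q})=0$ falls out of the same computation with $\lambda_0\mapsto-\lambda_0$, using $i\omega_{n_0}\tilde\tau\neq-i\omega_{n_0}\tilde\tau$. The main obstacle is precisely the sign and orientation bookkeeping in this last step: one must use the jump convention that makes the $\theta=-1$ atom reproduce $+\beta_2\phi_1(-1)$, handle the reversed limits of $\int_{\xi=0}^{\theta}$ with $\theta<0$, and retain the minus sign in front of the double integral, since a slip in any of these would flip the sign of the delay term in $M$. The eigenvector computations of the preceding steps are, by contrast, routine linear algebra once the exponential ansatz is fixed.
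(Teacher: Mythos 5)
Your proposal is correct and follows essentially the same route the paper intends: the paper omits the details, citing the standard references (Hale; Wu; Faria) for precisely this computation — exponential ansatz for the eigenfunctions of $A(\tilde{\tau})$ and its formal adjoint on the $n_0$-th mode, with the row-2/column-2 equations giving $q_1$ and $q_2^{\ast}$ and the characteristic equation (\ref{206b}) absorbing the remaining scalar equation. Your normalization step $(\hat{q}^{\ast},q)=1$ via the bilinear pairing (\ref{bi}), including the sign bookkeeping for the atom of $d\eta^k$ at $\theta=-1$ and the reversed inner integration limits, correctly reproduces the stated $M=\big(1+q_1q_2^{\ast}+\beta_2\tilde{\tau}e^{-i\omega_{n_0}\tilde{\tau}}\big)^{-1}$.
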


By the normal form method in \cite{Faria2002Trans}, we can obtain the following key parameters.
\begin{eqnarray*}\label{c426}
\Gamma_1&=&\frac{i}{2\omega_{0}\tau_{0}}\Big
(\mathbf{K}_{11}\mathbf{K}_{20}-2|\mathbf{K}_{11}|^{2}-\frac{|\mathbf{K}_{02}|^{2}}{3}\Big)+\frac{\mathbf{K}_{21}}{2},\\
\Gamma_2&=&-\frac{\mathrm{Re}(\Gamma_1)}{\mathrm{Re}(\lambda'(\tau_{0}))},\\
\Gamma_3&=&2\mathrm{Re}(\Gamma_1),
\end{eqnarray*}
with
\begin{eqnarray*}\label{c426}
\mathbf{K}_{20}&=&2\tilde{\tau}M(1,q^{\ast}_2)\cdot\begin{pmatrix} \beta_3+\beta_4q_1+\beta_5q^{2}_1+\beta_6e^{-i\omega_{n_0}\tilde{\tau}}
\\ \gamma_2+\gamma_3q_1+\gamma_4q^{2}_1\end{pmatrix}
\int_{0}^{l\pi}b_{n_0}^2\tilde{b}_{n_0}dx,\\
\mathbf{K}_{11}&=&\tilde{\tau}M(1,q^{\ast}_2)\cdot\begin{pmatrix} 2\beta_3+\beta_4(q_1+\bar{q}_1)+2\beta_5q_1\bar{q}_1
+\beta_6(e^{-i\omega_{n_0}\tilde{\tau}}+e^{i\omega_{n_0}\tilde{\tau}})
\\ 2\gamma_2+\gamma_3(q_1+\bar{q}_1)+2\gamma_4q_1\bar{q}_1\end{pmatrix}
\int_{0}^{l\pi}b_{n_0}^2\tilde{b}_{n_0}dx,\\
\mathbf{K}_{21}&=&2\tilde{\tau}M(1,q^{\ast}_2)\cdot\begin{pmatrix} 3\beta_7+3\beta_8q_1^{2}\bar{q}_1+\beta_9(\bar{q}_1+2q_1)+\beta_{10}(q_1^2+2q_1\bar{q}_1)
\\ 3\gamma_5+\gamma_6(\bar{q}_1+2q_1)\end{pmatrix}
\int_{0}^{l\pi}b_{n_0}^3\tilde{b}_{n_0}dx\\
&+&2\tilde{\tau}M\int_{0}^{l\pi}\mathbf{K}b_{n_0}\tilde{b}_{n_0}dx,
\end{eqnarray*}
where
\begin{eqnarray*}
\mathbf{K}&=&\beta_3\big[V_{20}^{(1)}(0)+2V_{11}^{(1)}(0)\big]+\beta_4\Big[V_{11}^{(2)}(0)+
\frac{V_{20}^{(2)}(0)}{2}+\frac{V_{20}^{(1)}(0)}{2}\bar{q}_1+V_{11}^{(1)}(0)q_1\Big]\\
&+&\beta_5\big[V_{20}^{(2)}(0)\bar{q}_1+2V_{11}^{(2)}(0)q_1\big]+\beta_6\bigg[V_{11}^{(1)}(-1)
+\frac{V_{20}^{(1)}(-1)}{2}+
\frac{V_{20}^{(1)}(0)}{2}e^{i\omega_{n_0}\tilde{\tau}}+V_{11}^{(1)}(0)e^{-i\omega_{n_0}\tilde{\tau}}\bigg]\\
&+&q^{\ast}_2\gamma_2\big[V_{20}^{(1)}(0)+2V_{11}^{(1)}(0)\big]+q^{\ast}_2\gamma_3\Big[V_{11}^{(2)}(0)+
\frac{V_{20}^{(2)}(0)}{2}+\frac{V_{20}^{(1)}(0)}{2}\bar{q}_1+V_{11}^{(1)}(0)q_1\Big]\\
&+&q^{\ast}_2\gamma_4\big[V_{20}^{(2)}(0)\bar{q}_1+2V_{11}^{(2)}(0)q_1\big].
\end{eqnarray*}

We can further derive the expressions for $V_{20}(\theta)$ and $V_{11}(\theta)$ $(\theta\in[-1,0])$ as follows.
\begin{eqnarray*}
	&&V_{20}(\theta)=\frac{-\mathbf{K}_{20}}{i\omega_{n_0}\tilde{\tau}}q(0)
	e^{i\omega_{n_0}\tilde{\tau}\theta}b_{n_0}-
	\frac{\bar{\mathbf{K}}_{02}}{3i\omega_{n_0}\tilde{\tau}}\bar{q}(0)
	e^{-i\omega_{0}\tilde{\tau}\theta}b_{n_0}+
	\Lambda_{1}e^{2i\omega_{n_0}\tilde{\tau}\theta},\\
	&&V_{11}(\theta)=\frac{\mathbf{K}_{11}}{i\omega_{n_0}\tilde{\tau}}q(0)
	e^{i\omega_{n_0}\tilde{\tau}\theta}b_{n_0}-
	\frac{\bar{\mathbf{K}}_{11}}{i\omega_{n_0}\tilde{\tau}}\bar{q}(0)
	e^{-i\omega_{n_0}\tilde{\tau}\theta}b_{n_0}+\Lambda_{2},
\end{eqnarray*}
where
\begin{eqnarray*}
	\Lambda_{1}=\sum^{\infty}_{n=1}\Lambda_{1,n}b_n,~ \Lambda_{2}=\sum^{\infty}_{n=1}\Lambda_{2,n}b_n.
\end{eqnarray*}
$\Lambda_{1,n}$ and $\Lambda_{2,n}$ have the following representation.
\begin{eqnarray*}
	\Lambda_{1,n}
	&=&\tilde{\tau}^{-1} L_1^{-1}\cdot2\begin{pmatrix} \beta_3+\beta_4q_1+\beta_5q_1^2+\beta_6e^{-i\omega_{n_0}\tilde{\tau}} \\ \gamma_2+\gamma_3q_1+\gamma_4q_1^2 \end{pmatrix}\int_{0}^{l\pi}b_{n_0}^{2}b_{n}dx,\\
\Lambda_{2,n}
	&=&\tilde{\tau}^{-1} L_2^{-1}\cdot2\begin{pmatrix} 2\beta_3+\beta_4(q_1+\bar{q}_1)+2\beta_5q_1\bar{q}_1
+\beta_6(e^{i\omega_{n_0}\tilde{\tau}}+e^{-i\omega_{n_0}\tilde{\tau}}) \\ 2\gamma_2+\gamma_3(q_1+\bar{q}_1)+2\gamma_4q_1\bar{q}_1 \end{pmatrix}\int_{0}^{l\pi}b_{n_0}^{2}b_{n}dx,
\end{eqnarray*}
with
\begin{eqnarray*}
L_1=\begin{pmatrix} 2i\omega_{n_0}+\nu_{n}-\beta_0-\beta_2e^{-2\mathrm{i}\omega_{n}\tilde{\tau}} & -\beta_1 \\  -\gamma_0 & 2i\omega_{n_0}+\varepsilon \nu_{n_0}-\gamma_1 \end{pmatrix},
L_2=\begin{pmatrix} \nu_{n}-\beta_0-\beta_2 & -\beta_1 \\  -\gamma_0 & \varepsilon \nu_{n}-\gamma_1 \end{pmatrix}.
\end{eqnarray*}

As it is well known that when $\Gamma_2>0 (<0)$, the Hopf bifurcation is forward (backward) 
and the bifurcating periodic solutions are orbitally stable (unstable) if $\Gamma_3< 0 (> 0)$.

\section{Influence of different functional responses and different boundary conditions}
In order to capture the biological significance of the fear effect,
in this section,
by selecting the specific form of $f(K, v)$ as $f(K,v)=\frac{1}{1+Kv}$ satisfying (\ref{e718}), our focus is on investigating the influence of different
functional response function $g(u)$ and different boundary conditions on the dynamics in the interaction in predator and prey.

I. With CF/D boundary

a) When we select the linear functional response function $g(u)=p_0u$,
the system (\ref{523a}) becomes
\begin{eqnarray}\label{607a}
\begin{cases}
    \frac{\partial u}{\partial t}=d_{1}\frac{\partial^2 u}{\partial x^2} -q_{1}\frac{\partial u}{\partial x} +u\big[\frac{r_0}{1+Kv}-d-au(x,t-\tau)\big]-p_0uv, \quad t>0, x\in(0,l\pi), \\
    \frac{\partial v}{\partial t}=d_{2}\frac{\partial^2 v}{\partial x^2} -q_{2}\frac{\partial v}{\partial x} +v\big(-r_{2}-mv\big)+c p_0uv, \quad t>0, x\in(0,l\pi).
\end{cases}
\end{eqnarray}
Let $m_0=\frac{a}{cp_0}>0$, $A_1=K(p_0+mm_0)>0$, $A_2=p_0+Km_0r_2+mm_0+dK>0$, and $A_3=m_0r_2+d-r_0$.

When $A_3<0$, in (\ref{607a}), there is a unique positive steady-state solution $(u_\ast,v_\ast)$ with
$v_\ast=\frac{-A_2+\sqrt{A_2^2-4A_1A_3}}{2A_1}$, $u_\ast=\frac{r_2+mv_\ast}{cp_0}$.
Consequently, the coefficients in (\ref{b213}) have the following explict form,
\begin{eqnarray*}
\begin{split}
\beta_{0}&=0,~\beta_{1}=-\frac{Kr_0u_{\ast}}{(1+Kv_{\ast})^2}-p_0u_{\ast},~\beta_{2}=-au_{\ast},~\beta_{3}=0,~
\beta_{4}=-\frac{Kr_0}{(1+Kv_{\ast})^2}-p_0,\\
\beta_{5}&=\frac{K^2r_0u_{\ast}}{(1+Kv_{\ast})^3},~\beta_{6}=-a,~\beta_{7}=0,
~\beta_{8}=-\frac{K^3r_0u_{\ast}}{(1+Kv_{\ast})^4},~\beta_{9}=0,~\beta_{10}=\frac{K^2r_0}{(1+Kv_{\ast})^3},\\
\gamma_{0}&=cp_0v_{\ast},~\gamma_{1}=-mv_{\ast},~\gamma_{2}=0,~\gamma_{3}=cp_0,~\gamma_{4}=-m,
~\gamma_{5}=0,~\gamma_{6}=0.
\end{split}
\end{eqnarray*}

With the fixed parameters as
\begin{eqnarray}\label{e622}
&&K=10,d=0.04,a=0.06,p_0=0.8,r_2=0.5,c=0.4,\nonumber \\
&&m=0.1,d_{1}=0.3,\varepsilon=8,
l=10,q_1=0.001,
\end{eqnarray}
when we fix $r_0=0.2$, here $\tilde{\tau}=7.3764$, numerically we can obtain the unique positive steady-state solution as $(u_{\ast}, v_{\ast})=(1.5712, 0.0278)$.
Enlarge the range of $r_0$ to $[0.13,1]$, where $(S_1)$ and $(S_3)$ hold.
According to (\ref{a819}), taking $r_0$ as a parameter, the stability region with respect to $\tau$ is given in Fig. \ref{a525}.
From Lemma \ref{a212} and Theorem \ref{b819}(ii), it is easy to obtain from Fig. \ref{a525} that the positive steady-state solution $(u_{\ast},v_{\ast})$ is asymptotically stable when
$\tau\in[0,7.3764)\cup(35.5432,35.7352)$. More clearly, Fig. \ref{a608} shows the local asymptotic stability  of the positive steady state when $\tau=5$. Fig. \ref{b608}(a) shows stable spatially nonhomogeneous periodic solutions when $\tau=8$. (The dynamics of predators are similar to those of prey, which is ignored.)
In addition to the effect of time delay, in Fig. \ref{b608}, we compared the FF/FF boundary condition with the CF/D boundary condition and found that the prey exhibits homogeneous periodic changes.
The boundary conditions have a significant impact on population density changes. In addition, if the advective environment is not considered, readers can refer to \cite{Duan2019Hopf}, and the population also exhibits homogeneous periodic changes.
As the time delay is much greater than $\tilde{\tau}$, taking 190 and 480 respectively, the system (\ref{607a}) exhibits unstable transients and unstable spatially nonhomogeneous quasi-periodic solutions, see Fig. \ref{d608}(a) and Fig. \ref{e608}. Under the FF/FF boundary conditions, when $\tau$ is sufficiently large, we find that the system exhibits unstable spatially homogeneous quasi-periodic solutions. Fig. \ref{d608}(b) shows the variation of the number of prey over time and space. The initial functions are all $u(x,0)=u_{\ast}+0.01\cos x$, $v(x,0)=v_{\ast}+0.01\cos x$ in Figs.\ref{a608}-\ref{e608}.
\begin{figure}[h]
\centering{\includegraphics[width=0.5\textwidth]{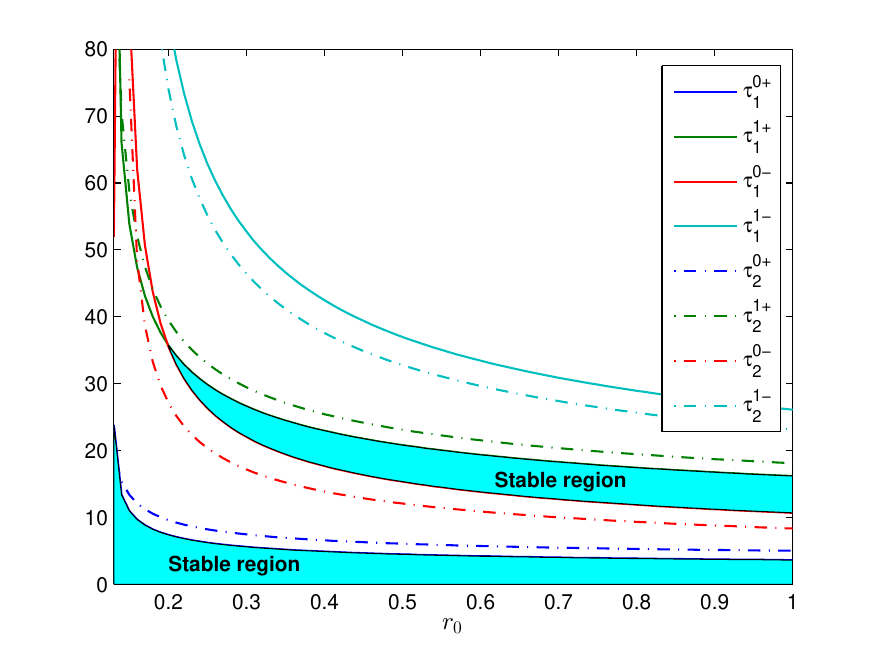}}
\caption{Partial Hopf bifurcation lines and stable regions on the $r_0-\tau$ plane.}
\label{a525}
\end{figure}

\begin{figure}[h]
\centering{\includegraphics[width=0.8\textwidth]{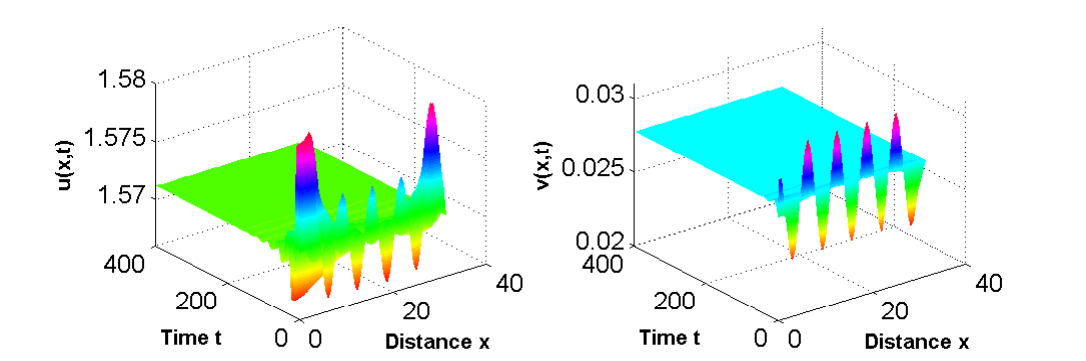}}
\caption{When $\tau=5$, the positive steady state of (\ref{607a}) is locally asymptotically stable.}
\label{a608}
\end{figure}

\begin{figure}[h]
\centering{(a)\includegraphics[width=0.4\textwidth]{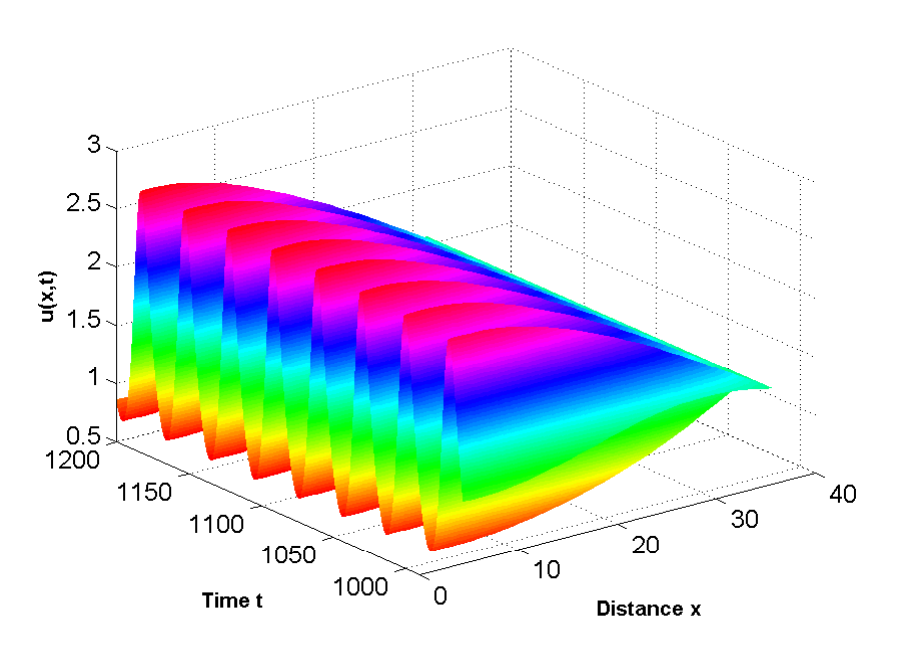}}
(b)\includegraphics[width=0.4\textwidth]{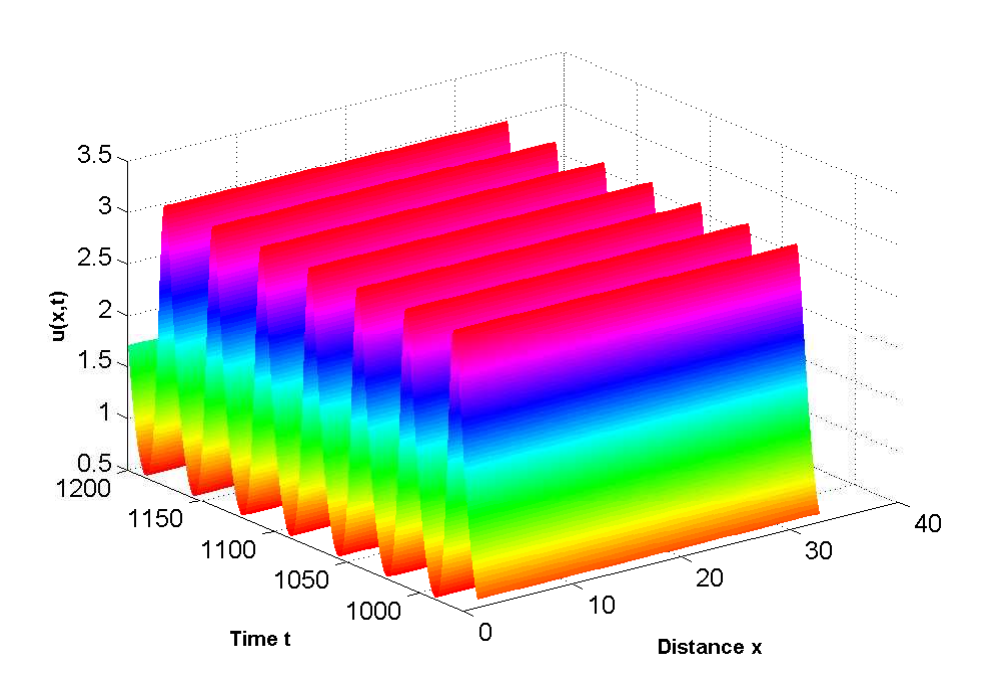}
\caption{When $\tau=8$, the system (\ref{607a}) exhibits stable spatially nonhomogeneous and homogeneous periodic solutions with different boundary conditions, respectively. (a)CF/D boundary; (b)FF/FF boundary.}
\label{b608}
\end{figure}

\begin{figure}[h]
\centering{(a)\includegraphics[width=0.4\textwidth]{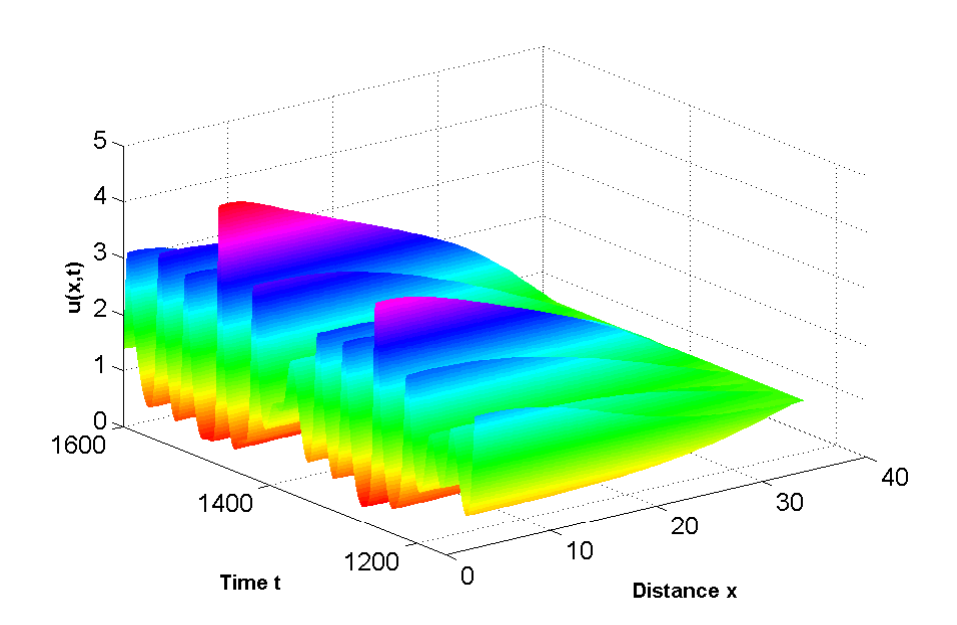}}
(b)\includegraphics[width=0.4\textwidth]{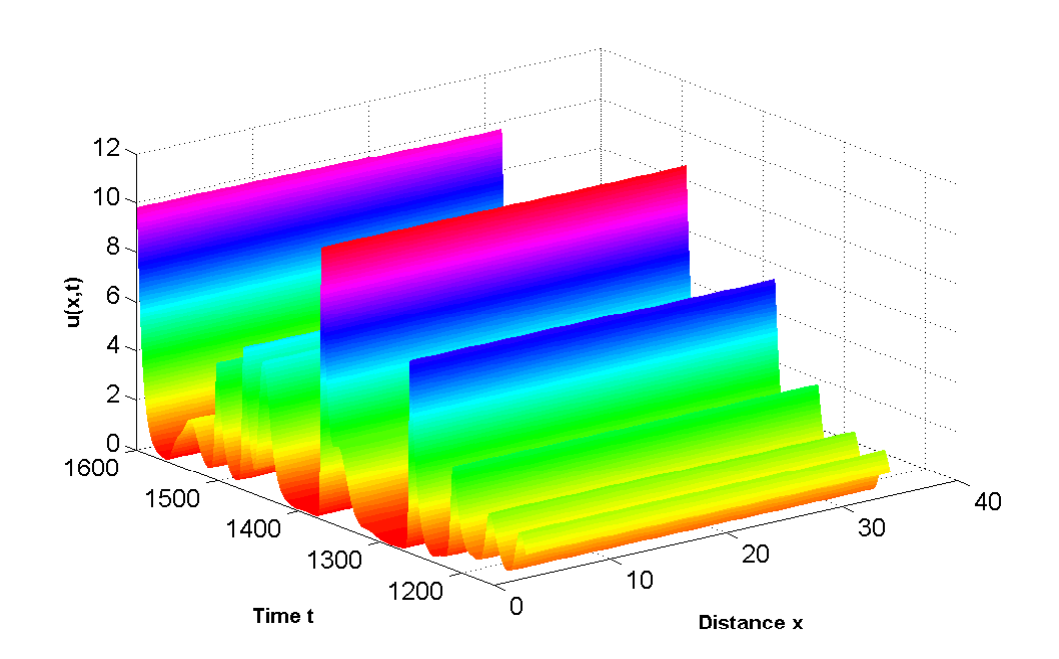}
\caption{When $\tau=190$, the system (\ref{607a}) exhibits unstable spatially nonhomogeneous/homogeneous quasi-periodic solutions. (a)CF/D boundary; (b)FF/FF boundary.}
\label{d608}
\end{figure}

\begin{figure}[h]
\centering{\includegraphics[width=0.8\textwidth]{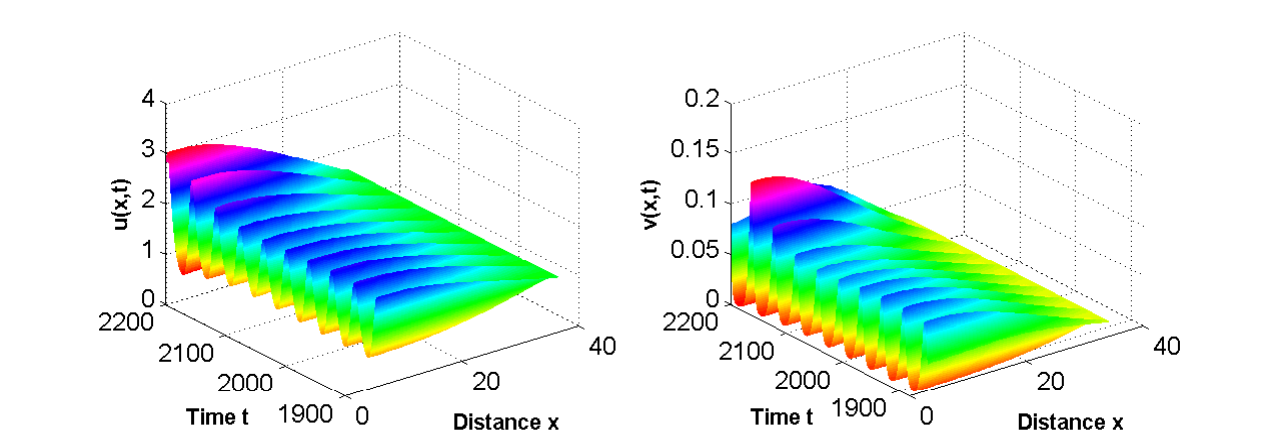}}\\
\centering{\includegraphics[width=0.8\textwidth]{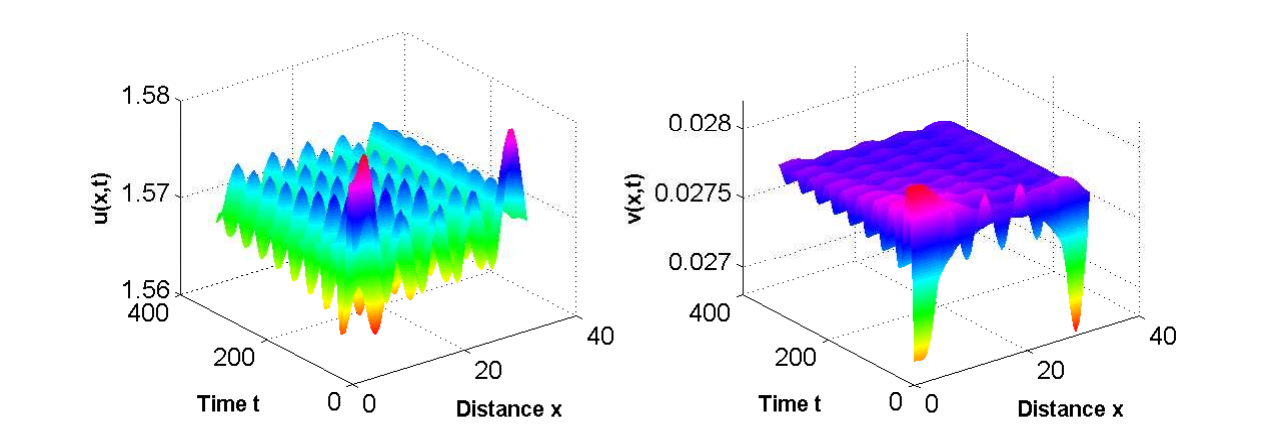}}
\caption{When $\tau=480$, the system (\ref{607a}) experiences two unstable transients.}
\label{e608}
\end{figure}

b) If we choose Holling-II functional response function $g(u)=\frac{p_0u}{1+c_1u}$,
then (\ref{523a}) has the following form:
\begin{eqnarray}\label{f622}
\begin{cases}
    u_t=d_{1}u_{xx}-q_{1}u_x +u\big[\frac{r_{0}}{1+Kv}-d-au(x,t-\tau)\big]-\frac{p_0u}{1+c_1u}v, \quad t>0, x\in(0,l\pi), \\
    v_t=d_{2}v_{xx}-q_{2}v_x +v\big(-r_{2}-mv\big)+\frac{cp_0uv}{1+c_1u},  \quad t>0, x\in(0,l\pi).
\end{cases}
\end{eqnarray}
Keeping all the parameters consistent with (\ref{e622}) and choosing $c_1=0.2$,  we can obtain the unique positive steady-state solution $u_{\ast}=2.2792$, $v_{\ast}=0.0098$.  Enlarge the range of
$r_0$ to [0.18,1], where (S1) and (S3) hold. Fig. \ref{d819} shows that $\tau_{n_0}^{j\pm}$ is monotonically decreasing with respect to $r_0$. When $r_0=0.2$ is fixed, $\tilde{\tau}=8.835$.
With the initial functions $u(x,0)=u_{\ast}+0.001\cos x$, $v(x,0)=v_{\ast}+0.001\cos x$, we find that the system (\ref{f622}) exhibits stable spatially nonhomogeneous periodic solutions (see Fig. \ref{h622}(a))
when $\tau=16>\tilde{\tau}$. For predators, the population mainly gathers at the upstream end, i.e. near $x=0$. The density of prey at the upstream and downstream ends is higher than that in the middle section, that is, they gather at $x=0$ and $x=l\pi$. In  Fig. \ref{b608}, both predators and prey gather at the upstream end. From this, it can be seen that even if the system has a stable periodic solution, the activity regions of prey and predator are significantly different due to differences in functional response functions and time delays. In addition, keeping the time delay constant, i.e. $\tau=16$, combined with the FF/FF boundary conditions, we find that predators eventually become extinct, and the number of prey varies periodically in Fig. \ref{h622}(b).
\begin{figure}[h]
\centering{\includegraphics[width=0.5\textwidth]{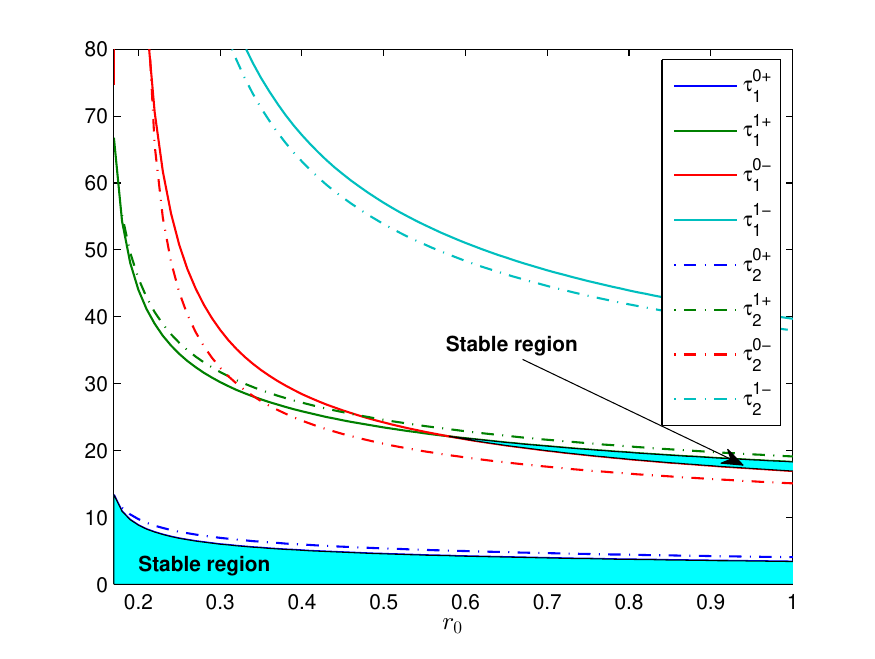}}
\caption{Partial Hopf bifurcation lines and stable regions on the $r_0-\tau$ plane of system (\ref{f622}).}
\label{d819}
\end{figure}

\begin{figure}[h]
\centering{(a)\includegraphics[width=0.7\textwidth]{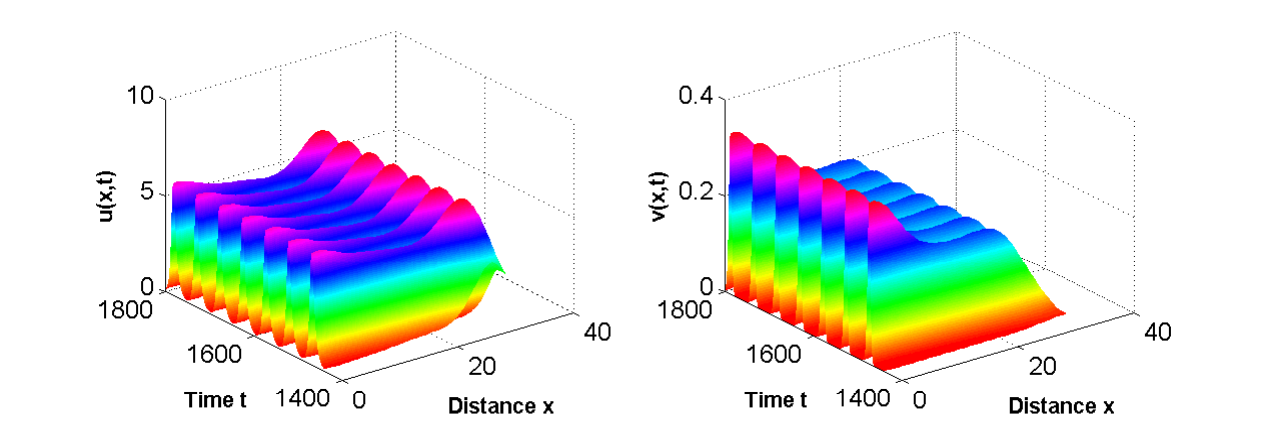}}
{(b)\includegraphics[width=0.7\textwidth]{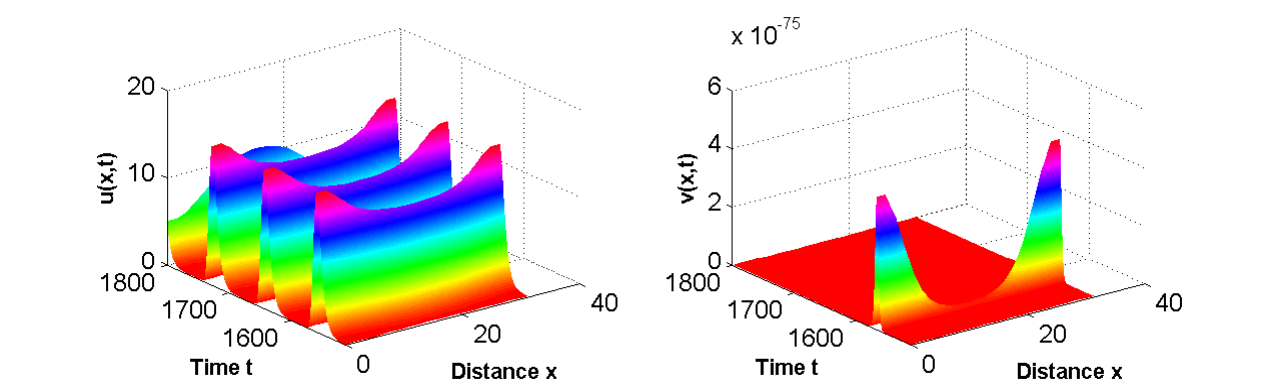}}
\caption{$\tau=16$. (a)CF/D boundary; (b)FF/FF boundary.}
\label{h622}
\end{figure}

\section{Conclusion}\label{b1112}
Firstly, we delve into the dynamics of a predator-prey model that incorporates fear effects in an advective environment. We have explored this model under two boundary conditions: CF/D and FF/FF. To obtain a deeper understanding, we calculate the eigenvalues and adjoint operators for each condition.
Next, our focus are on performing a linear stability analysis of the system. In our study, we select the maturation time delay $\tau$ as the bifurcation parameter and derive the normal form of Hopf bifurcation near the positive steady-state solution of the model.
Upon analyzing the numerical results, we have observed that the system exhibits various interesting phenomena for
different delays/boundary conditions. This includes the presence of stable spatial nonhomogeneous periodic solutions, unstable transients, and more. We have examined both linear and Holing-II functional response functions.
Moreover, under the FF/FF boundary condition, we have found that the system displays stable homogeneous periodic solutions, and even the system could ultimately lead to the extinction of the predator.

\section*{Acknowledgement}
This research is supported by Natural Science Foundation of Shandong Province (No. ZR2022QA075) and Natural Sciences and Engineering Research Council of Canada (No. RGPIN-2023-05976).


\end{document}